\definecolor{newblue}{RGB}{94,89,144}
\definecolor{newblue2}{cmyk}{1,0.6,0,0.06}
\newcommand{\nautoref}[2]{\hyperref[#2]{#1~\ref{#2}}}
\newtheorem{remark}[theorem]{Remark}
\numberwithin{equation}{section}
\numberwithin{theorem}{section}
\newcommand{\dd}{\,\mathrm{d}}
\newcommand{\abs}[1]{\left|#1\right|}
\newcommand{\norm}[1]{\left\|#1\right\|}
\newcommand{\R}{\mathbb{R}}
\newcommand{\N}{\mathcal{N}}
\newcommand{\T}{\mathscr{T}}
\newcommand{\Hm}{\mathscr{H}}
\newcommand{\md}{\partial^\bullet}
\newcommand{\id}{\mathrm{Id}}
\renewcommand{\tilde}{\widetilde}
\title{Unfitted finite element methods using bulk meshes for surface partial differential equations}
\author{Klaus Deckelnick \footnotemark[1]\ \and
  Charles M.~Elliott \footnotemark[2]\ \and
  Thomas Ranner \footnotemark[2]%
  \thanks{%
   The work of C. M. Elliott was supported by the Warwick
Impact Fund. The work of T. Ranner was supported by
an EPSRC Ph.D. studentship (Grant EP/P504333/1 and EP/P50516X/1) and the Warwick
Impact Fund.   }%
}
\begin{document}

\maketitle

\renewcommand{\thefootnote}{\fnsymbol{footnote}}

\footnotetext[2]{Otto-von-Guericke-Universit\"{a}t Magdeburg,
Institut f\"{u}r Analysis und Numerik,
Universit\"{a}tsplatz 2,
39106 Magdeburg,
Germany (klaus.deckelnick@ovgu.de)}
\footnotetext[3]{Mathematics Institute, Zeeman Building, University of Warwick, Coventry. CV4 7AL. UK (C.M.Elliott@warwick.ac.uk, T.Ranner@warwick.ac.uk)}

\renewcommand{\thefootnote}{\arabic{footnote}}

\begin{abstract}
  In this paper, we define  new unfitted finite element methods for numerically approximating the solution of surface partial differential equations using bulk finite elements. The key idea is that the $n$-dimensional hypersurface, $\Gamma \subset \R^{n+1}$, is embedded in a polyhedral domain in $\mathbb R^{n+1}$ consisting of a union, $\T_h$, of $(n+1)$-simplices. The finite element approximating space is based on continuous piece-wise linear finite element functions on $\T_h$. Our first method is a  sharp interface method, \emph{SIF}, which uses the bulk finite element space in  an approximating weak formulation obtained from integration on a polygonal approximation, $\Gamma_{h}$, of $\Gamma$. The full gradient is used rather than the projected tangential gradient and it is this which distinguishes \emph{SIF} from the method of \cite{OlsReuGra09}. The second method, \emph{NBM}, is a narrow band method in which the region of integration is a narrow band of width $O(h)$. \emph{NBM} is similar to the method of \cite{DecDziEll10} but again the full gradient is used in the discrete weak formulation. The a priori error analysis in this paper shows that the methods are of optimal order in the surface $L^{2}$ and $H^{1}$ norms and have the advantage that the normal derivative of the discrete solution is small and converges to zero. Our third method combines bulk finite elements, discrete sharp interfaces and narrow bands in order to give an unfitted finite element method for parabolic equations on evolving surfaces. We show that our  method is conservative so that it preserves mass in the case of an advection diffusion conservation law. Numerical results are given which illustrate the rates of convergence.
\end{abstract}

\begin{keywords}
  Unfitted finite elements; cut cells; error analysis; narrow band; sharp interface; elliptic and parabolic surface equations
\end{keywords}


\pagestyle{myheadings}
\thispagestyle{plain}
\markboth{K. DECKELNICK, C.~M. ELLIOTT AND T. RANNER}{UNFITTED FINITE ELEMENT METHODS FOR SURFACE PDES}


\section{Introduction}

\subsection{Model equations and motivation}

In this article we propose and analyse numerical methods based on bulk finite element meshes for the following model elliptic equation on a stationary surface.

\noindent \emph{Model elliptic equation on stationary surface}:
Let $\Gamma$ be a smooth hypersurface in $\R^{n+1}$ and $f \in L^2(\Gamma)$. We seek solutions $u\colon \Gamma \to \R$ of
\begin{equation}
  \label{eq:poisson}
  -\Delta_\Gamma u + u = f \quad \mbox{ on } \Gamma.
\end{equation}
The methods can be extended in natural ways to deal with variable coefficients and nonlinearities. The approach may be extended to the following advection diffusion equation on a moving surface.

\noindent \emph{Model parabolic equation on evolving surface}:
Let $\{ \Gamma(t) \}$ be a family of smooth hypersurfaces in $\R^{n+1}$ for $t \in [0,T]$. We seek solutions $u \colon \bigcup_{t} \Gamma(t) \times \{ t \}$ of  the advection diffusion equation
\begin{subequations}
  \label{advdiff}
  \begin{align}
    \md u + u \nabla_\Gamma \cdot v - \Delta_\Gamma u  &= f \quad \mbox{ on } \bigcup_{t \in (0,T)} \Gamma(t) \times \{ t \},
    \\
    u(\cdot, 0)  & =  u_0 \quad \mbox{ on } \Gamma(0).
  \end{align}
\end{subequations}
Here, $\md u$ denotes the material derivative of $u$ and $v$ is the velocity vector. See \S\ref{hybridmoving} for notation.

Surface partial differential equations or  partial differential equations (PDEs) on manifolds  arise in a wide variety of applications in materials science, fluid dynamics and biology, \cite{Sto90, Mul95,  SchDes97, BarMaiPad04, NovGaoCho07, LaiTseHua08, EilEll08, GanTob09, EllSti10siam, BarEllMad11, ErlMcC12, EllStiVen12, RatRog12, MarOrl13}. Often the surface on which the PDE holds is unknown and has to be found as part of the solution process. Thus complex applications involving surfaces and interfaces frequently require the formulation and approximation of equations on unknown stationary and moving surfaces and are not only coupled to equations for the surface but also to equations holding in the bulk. A number of computational approaches have been developed in recent years, see \cite{DziEll13-a}. They are often designed  in the context of solving the surface equations in a more complex application. In particular we mention:-

\noindent {\it Surface finite elements on triangulated surfaces}:
This approach  was pioneered in \cite{Dzi88} for computing solutions to the Poisson equation using piece-wise linear elements on triangulated surfaces. This was extended to nonlinear and fourth order surface parabolic equations in \cite{DziEll07-b}. Using an appropriate weak formulation and the transport property of finite element basis functions an evolving surface finite element method was devised in \cite{DziEll07-a} in order to treat conservation laws on moving surfaces. The key idea is to use the Leibniz (or transport) formula for the time derivative of integrals over moving surfaces in order to derive weak and variational formulations. Further numerical analysis of surface finite element methods may be found in \cite{DemDzi07, Dem09, DziEll07-b, DziEll13, DziEll12, DziLubMan12}.

\noindent{\it Bulk finite element or finite difference meshes for the approximation of implicit surface formulations}:
\label{en:eulerian}
The idea here is to use implicit surface formulations. The starting point is to use a level set function  $\Phi$ to define a degenerate partial differential equation whose solution solves the surface equation on all level sets of $\Phi$. Such methods are formulated in \cite{BerCheOsh01, GreBerSap06, Bur09, DziEll08-a, DziEll10}. Approaches to obtaining a non-degenerate level set equation may be found in \cite{Gre06, CheOls13}.

\noindent {\it Bulk finite element or finite difference meshes on narrow bands}:
\label{en:narrowband}
A natural disadvantage of the approach \ref{en:eulerian} is the fact that formulation is in the ambient space rather than just on the surface. This leads to solving PDEs in one space dimension higher than the hypersurface. Narrow band methods confine the use of bulk finite elements to a narrow band around the surface and the region of integration is the narrow band.  In particular the level  set approach in \cite{DecDziEll10} is confined to an $O(h)$-narrow band. Another direction is to solve the bulk PDE in a narrow band of width $\epsilon$, say. The solution of this problem converges to the solution of the surface PDE. This is the basis of the finite difference method in  \cite{SchAdaCol05}.

\noindent {\it Bulk finite element methods and phase field diffuse interfaces}:
This approach is motivated by  the diffuse interfaces that arise  in phase field approximations of interface problems, see e.g.\ \cite{DecDziEll05}.  The idea is to exploit the methodology to generate methods for solving partial differential equations on the interfaces, \cite{RatVoi06},
 \cite{EllStiSty11}.

\noindent {\it Bulk finite element meshes and sharp interface weak forms}
\label{en:sharpinterface}
If one takes the width of the narrow band to be zero in the approach of \cite{DecDziEll10} one obtains the appealing method of  \cite{OlsReuGra09} for equations on stationary surfaces. The authors  prove that for piece-wise linear elements one obtains second order convergence in the $L^{2}$ norm. There is an issue about the dimension of the resulting linear algebraic equations and their conditioning. This is addressed in \cite{OlsReu10}. For other developments we refer to \cite{DemOls12} which concerns an adaptive version and  \cite{OlsReuXu13} which concerns the surface meshes induced by the bulk mesh on level sets.This approach has been extended recently to a novel Eulerian  space-time formulation using space-time bulk finite element meshes,  \cite{OlsReuXu13-a}.

An important feature of the methods described above is the avoidance of \emph{charts} both in the problem formulation and the numerical methods. For example, the surface finite element method is based simply on triangulated surfaces and requires the geometry solely through the knowledge of the vertices of the triangulation whereas methods based on implicit surfaces require only the level set function $\Phi$ which encodes all the necessary geometry.

Another feature of some of these methods is the use of unfitted bulk meshes. Here we use the terminology \emph{unfitted finite element methods} (sometimes called cut cell methods) when the underlying meshes that form the computational domain are not fitted to the domain in which the PDE holds. The motivation for using finite element spaces on meshes not fitting to the domain came from the desire to solve free or moving boundary problems. Such methods were introduced in \cite{BarEll82, BarEll84}  for elliptic equations in curved domains. See also \cite{HanHan02, BasEng09, EngHei12}. In this setting we are concerned with bulk meshes independent of the surface.

\subsection{The new methods}

The new unfitted finite element  methods for surface elliptic equations  proposed in this paper are  variants of the bulk finite element  approaches using a sharp interface or a narrow band. The new scheme for advection diffusion on an evolving surface is a hybrid of these. In the following we sketch the main ideas of these methods describing the details in \S\ref{SIF}-\S\ref{hybridmoving}.

\subsubsection{Sharp interface method (\emph{SIF})}

Given an interpolation $\Gamma_{h}$ of $\Gamma$, we use a bulk finite element space $V^I_h$ of the form
\begin{equation*}
  V_h^{I} = \{ \phi_h \in C^0(U_h^I)  \, | \, \phi_{h|T} \in P_1(T) \mbox{ for each }  T \in \T_h^I \},
\end{equation*}
where $\T_{h}^{I}$ is a set of elements which intersect $\Gamma_{h}$ and $U_h^I=\bigcup_{T \in \T_h^I} T$, see \S\ref{SIF}. The discrete scheme approximating the model elliptic equation \eqref{eq:poisson} is:- Find $u_h \in V_h^{I}$ such that
\begin{equation}
  \label{sim}
  \int_{\Gamma_h} \bigl( \nabla u_h \cdot \nabla \phi_h + u_h \phi_h \bigr) \dd \sigma_h = \int_{\Gamma_h} f^e \phi_h \dd \sigma_h
  \quad \mbox{ for all } \phi_h \in V_h^{I},
\end{equation}
where $f^{e}$ is an extension of $f$.

The method is related to  the following method of Olshanskii et. al., introduced in  \cite{OlsReuGra09}:-  Find $u_h \in V^{\Gamma}_h$ such that
\begin{equation*}
  \int_{\Gamma_h} \bigl( \nabla_{\Gamma_h} u_h \cdot \nabla_{\Gamma_h} \phi_h + u_h \phi_h \bigr) \dd \sigma_h = \int_{\Gamma_h} f^e \phi_h \dd \sigma_h
  \quad \mbox{ for all } \phi_h \in V^{\Gamma}_h.
\end{equation*}
Apart from the use of the full gradient in (\ref{sim}) as opposed to the tangential gradient, another difference relates to the use of the finite element space  $V^{\Gamma}_h$,  which essentially consists of the traces on $\Gamma_h$ of elements in $V_h^{I}$. However, while $V_h^{I}$ has a natural basis, this does not seem to be the case for $V^{\Gamma}_h$.

\subsubsection{Narrow band method (\emph{NBM})}

We use  the bulk finite element space $V_h^{B}$ on the triangulation $\T^B_h$
\begin{equation*}
  V_h^{B} = \{ \phi_h \in C^0(U^B_h) \, | \, \phi_{h|T} \in P_1(T) \mbox{  for each } T \in \T^B_h \}.
\end{equation*}
Here $\T^{B}_{h}$ consists of those triangles intersecting a narrow band domain $D_{h}$ defined by the $\pm h$ level sets of an interpolated level set function $I_h \Phi$ and $U_h^B=\bigcup_{T \in \T_h^B} T$. The discrete scheme approximating the model elliptic equation (\ref{eq:poisson}) is:- Find $u_h \in V_h^{B}$ such that
\begin{equation}
  \label{nbm}
  \int_{D_h} \bigl( \nabla u_h \cdot \nabla \phi_h + u_h \phi_h \bigr) \abs{ \nabla I_h \Phi }  \dd x
  = \int_{D_h} f^e \phi_h \abs{ \nabla I_h \Phi } \dd x
  \quad \mbox{ for all } \phi_h \in V_h^B.
\end{equation}

This is similar to the method in \cite{DecDziEll10} except that \emph{NBM} uses the full instead of projected gradients thus avoiding the resulting degeneracy. As a result we are able to prove an optimal $L^2$-error bound which was not obtained for the method in \cite{DecDziEll10}. It is also the case that the normal derivative of the discrete solution converges to zero.

\subsubsection{Hybrid unfitted evolving surface method}

The discrete problem approximating (\ref{advdiff}) is:- Given  $u_h^m \in V^{m}_h, m=0,\ldots,N-1$, find $u^{m+1}_h \in V^{m+1}_h$ such that
\begin{equation}
  \begin{aligned}
    & \int_{\Gamma_h^{m+1}} u_h^{m+1} \phi_h \dd \sigma_h - \int_{\Gamma_h^{m}} u_h^m \phi_h  (\cdot + \tau_m v^{e,m+1}) \dd \sigma_h \\
    & + \frac{\tau_m}{2 h} \int_{D_h^{m+1}} \nabla u_h^{m+1} \cdot \nabla \phi_h \, \abs{ \nabla I_h \Phi^{m+1} }  \dd x
    =   \tau_m  \int_{\Gamma_h^{m+1}} f^{e,m+1} \phi_h \dd \sigma_h
  \end{aligned}
\end{equation}
for all $\phi_h \in V^{m+1}_h$.  Here $v^{e,m}$ denotes an extension of the surface velocity at time level $m$. We use  time step labelled analogues of the notation for the narrow band method, see \S\ref{hybridmoving} for the details. Here, $u^0_h $ is appropriate initial data.  An important property of solutions of \eqref{advdiff} is conservation of mass in the case that $f \equiv 0$ and our numerical scheme preserves this property under some mild constraints on the discretization parameters, see \S\ref{hybridmoving}.

\subsection{Outline}

The paper is organized as follows: in \S\ref{prelim} we introduce our notation and collect some auxiliary results. In \S\ref{SIF} and \S\ref{NBM} we present and analyse unfitted methods for the model elliptic equation \eqref{eq:poisson}. In \S\ref{hybridmoving} we describe how a combination of these two approaches can be used to calculate solutions of the advection-diffusion equation on evolving hypersurfaces, \eqref{advdiff}. Details of the implementation and several numerical examples illustrating the orders of convergence are presented in \S\ref{numerical}.


\section{Preliminaries}
\label{prelim}

\subsection{Surface calculus}
\label{surfcalc}

Let $\Gamma$ be a connected compact smooth hypersurface embedded in $\R^{n+1}$ $(n=1,2)$. We assume that there exists a smooth function $\Phi:U \rightarrow \mathbb{R}$ such that
\begin{equation*}
  \Gamma = \lbrace x \in U \, | \, \Phi(x)=0 \rbrace \quad \mbox{ and } \quad \nabla \Phi(x) \neq 0, x \in U,
\end{equation*}
where $U$ is an open neighbourhood of $\Gamma$. For a function $z:\Gamma \rightarrow \R$ we define its tangential gradient by
\begin{equation}
  \label{tanggrad}
  \nabla_{\Gamma} z(p):= \nabla \tilde{z}(p) - \bigl( \nabla \tilde{z}(p) \cdot \nu(p)  \bigr) \nu(p), \quad p \in \Gamma,
\end{equation}
where $\tilde{z}:U \rightarrow \R$ is an arbitrary extension of $z$ to $U$ and
\begin{equation*}
  \nu(x) = \frac{\nabla \Phi(x)}{\abs{ \nabla \Phi(x) }}
\end{equation*}
is a unit vector to the level sets of $\Phi$. It can be shown that $\nabla_{\Gamma} z(p)$ is independent of the particular choice of $\tilde{z}$. We denote by $\underline{D}_i z, 1 \leq i \leq n+1$ the components of $\nabla_{\Gamma} z$. Furthermore, we let
\begin{equation*}
  \Delta_\Gamma z = \nabla_\Gamma \cdot \nabla_\Gamma z = \sum_{i=1}^{n+1} \underline{D}_i \underline{D}_i z
\end{equation*}
be the Laplace-Beltrami operator of $z$.

In what follows it will be convenient to use special coordinates which are adapted to $\Phi$. Consider for $p \in \Gamma$ the system of ODEs
\begin{equation}
  \label{odesys}
  \gamma_p'(s) = \frac{\nabla \Phi(\gamma_p(s))}{| \nabla \Phi(\gamma_p(s)) |^2}, \quad \gamma_p(0)=p.
\end{equation}
It can be shown that there exists $\delta>0$ so that the solution $\gamma_p$ of \eqref{odesys} exists uniquely on $(-\delta,\delta)$ uniformly in $p \in \Gamma$, so that we can define the mapping $F:\Gamma \times (-\delta,\delta) \rightarrow \mathbb{R}^{n+1}$ by
\begin{equation}
  \label{defF}
  F(p,s):=\gamma_p(s), \quad p \in \Gamma, |s| < \delta.
\end{equation}
Since $\frac{d}{ds} \Phi(\gamma_p(s)) = 1$ and $\gamma_p(0)=p \in \Gamma$, we infer that $\Phi(\gamma_p(s))=s, |s| < \delta$ and hence that $x=F(p,s)$ implies that $| \Phi(x) | < \delta$. As a result, we deduce that $F$ is a diffeomorphism of $\Gamma \times (-\delta,\delta)$ onto $U_{\delta}:= \lbrace x \in U \, | \,
| \Phi(x) | < \delta \rbrace$ with inverse
\begin{equation}
  \label{eq:Finv}
  F^{-1}(x)= (p(x),\Phi(x)), \quad x \in U_{\delta},
\end{equation}
where $p: U_{\delta} \rightarrow \mathbb{R}^{n+1}$ satisfies $p(x) \in \Gamma, x \in U_{\delta}$. For later purposes it is convenient to expand $p$ and its derivatives in terms of $\Phi$. Let us fix $x \in U_{\delta}$ and define the function
\begin{equation*}
  \eta(\tau) := F(p(x), (1-\tau) \Phi(x)), \tau \in [0,1].
\end{equation*}
Since $\frac{\partial F}{\partial s}(p,s) = \gamma_p'(s)$ we have
\begin{equation*}
  \eta'(\tau)
  = - \Phi(x) \gamma_{p(x)}'((1-\tau) \Phi(x))
  = -\Phi(x) \frac{ \nabla \Phi( \gamma_{p(x)} ((1-\tau) \Phi(x)))}{ \abs{ \nabla \Phi( \gamma_{p(x)} ( ( 1 - \tau ) \Phi(x))) }^2 }.
\end{equation*}
Observing that $\gamma_{p(x)} ( \Phi(x) ) = F( p(x), \Phi(x) ) = x$ and using similar arguments to calculate $\eta''(\tau)$ we find that
\begin{equation*}
  \eta_k'(0) = - \Phi(x) \frac{ \Phi_{x_k}(x)}{ \abs{ \nabla \Phi(x) }^2},
  \quad
  \eta_k''(0) = \Phi(x)^2 \, \sum_{l,r=1}^{n+1} \left(
    \delta_{kr} - \frac{2 \Phi_{x_k}(x) \Phi_{x_r}(x)}{ \abs{ \nabla \Phi(x) }^2}
    \right)
    \frac{ \Phi_{x_l}(x) \Phi_{x_l x_r}(x)}{\abs{ \nabla \Phi(x) }^4},
\end{equation*}
$k=1,\ldots,n+1$. Since $\eta(1)=p(x), \eta(0)=x$ we deduce with the help of Taylor's theorem that
\begin{equation}
  \label{pk}
  \begin{aligned}
    p_k(x) & = x_k - \Phi(x) \frac{\Phi_{x_k}(x)}{| \nabla \Phi(x) |^2} + \frac{1}{2} \Phi(x)^2 \sum_{l,r=1}^{n+1} \Bigl( \delta_{kr}-
    \frac{2 \Phi_{x_k}(x) \Phi_{x_r}(x)}{| \nabla \Phi(x) |^2} \Bigr) \frac{\Phi_{x_l}(x) \Phi_{x_l x_r}(x)}{| \nabla \Phi(x) |^4} \\
    & \quad + \Phi(x)^3 r_k(x), \quad k=1,\ldots,n+1,
  \end{aligned}
\end{equation}
where $r_k$ are smooth functions. In a similar way we may write
\begin{equation}
  \label{nablaphi}
  \nabla \Phi(x) = \nabla \Phi(p(x)) + \Phi(x) G(x),
\end{equation}
where $G(x) = \int_0^1 D^2 \Phi(p(x),\tau \Phi(x)) \frac{\partial F}{
\partial s}(p(x),\tau \Phi(x)) \dd \tau$.

Let us next use the function $p$ in order to define a particular extension of $z:\Gamma \rightarrow \R$ by
\begin{equation}
  \label{extension}
  z^e(x):= z(p(x)), \quad x \in U_{\delta}.
\end{equation}
Since $p( F(p(x), s ) ) = p(x)$ we deduce that $s \mapsto z^e(F(p(x),s))$ is independent of $s$ and therefore
\begin{equation}
  \label{zetang}
  \nabla z^e(x) \cdot \nu(x) = 0, \quad x \in U_{\delta}.
\end{equation}
In order to express the derivatives of $z^e$ in terms of the tangential derivatives of $z$ we first deduce from \eqref{pk} that
\begin{align*}
  p_{k,x_i}(x) = & \delta_{ik} - \frac{\Phi_{x_k}(x) \Phi_{x_i}(x)}{\abs{ \nabla \Phi(x) }^2} - \frac{ \Phi(x) \Phi_{x_k x_i}(x)}{\abs{ \nabla \Phi(x) }^2} +
  2 \Phi(x) \Phi_{x_k}(x) \sum_{l=1}^{n+1} \frac{ \Phi_{x_l}(x) \Phi_{x_l x_i}(x)}{ \abs{ \nabla \Phi(x) }^4} \\
  & + \Phi(x) \Phi_{x_i}(x) \sum_{l,r=1}^{n+1} \bigl( \delta_{kr} - \frac{2 \Phi_{x_k}(x) \Phi_{x_r}(x)}{ \abs{ \nabla \Phi(x) }^2} \bigr)
  \frac{\Phi_{x_l}(x) \Phi_{x_l x_r}(x)}{ \abs{ \nabla \Phi(x) }^4} + \Phi(x)^2 \alpha^i_k(x).
\end{align*}
Combining this relation with \eqref{nablaphi} we deduce that
\begin{align}
  \label{pk1}
  p_{k,x_i}(x) & = \delta_{ik} - \nu_i(p(x)) \nu_k(p(x)) + a_{ik}(x) \Phi(x), \\[2mm]
  p_{k,x_i x_j}(x) & = - \frac{\Phi_{x_i}(x) \Phi_{x_k x_j}(x)}{| \nabla \Phi(x) |^2} -
  \frac{\Phi_{x_j}(x) \Phi_{x_k x_i}(x)}{| \nabla \Phi(x) |^2} \nonumber  \\
  \label{pk2}
  & \quad + \frac{\Phi_{x_i}(x) \Phi_{x_j}(x)}{| \nabla \Phi(x) |^2} \sum_{l=1}^{n+1} \frac{\Phi_{x_l}(x) \Phi_{x_k x_l}(x)}{| \nabla \Phi(x) |^2}  + \beta^{ij}_k(x) \nu_k(p(x)) + \gamma^{ij}_k(x) \Phi(x)
\end{align}
where $a_{ik}, \beta^{ij}_k, \gamma^{ij}_k$ are smooth functions. Differentiating \eqref{extension} and using \eqref{pk1}, \eqref{pk2} as well as the fact that $\sum_{k=1}^{n+1} \underline{D}_k z(p(x)) \nu_k(p(x))=0$ we obtain
\begin{align}
  \label{gradze}
  & \nabla z^e(x) = \bigl( I + \Phi(x) A(x) \bigr) \nabla_{\Gamma} z(p(x)), \\
  \label{d2ze}
  & \frac{1}{| \nabla \Phi(x)|} \nabla \cdot \bigl( | \nabla \Phi(x)| \nabla z^e(x) \bigr) \\
  & \qquad = (\Delta_{\Gamma} z)(p(x)) + \Phi(x) \Bigl( \sum_{k,l=1}^{n+1} b_{lk}(x) \underline{D}_l \underline{D}_k z(p(x)) + \sum_{k=1}^{n+1} c_k(x) \underline{D}_k z(p(x)) \Bigr),
  \nonumber
\end{align}
where $A=(a_{ik}),b_{lk}$ and $c_k$ are again smooth.

\subsection{Bulk finite element space and inequalities}

In what follows we assume that the set $U$ is polyhedral. Let $(\T_h)_{0<h \leq h_0} $ be a  family of triangulations consisting of closed simplices $T$ with maximum mesh size $\displaystyle h:= \max_{T \in \mathcal{T}_h}h(T)$, where $h(T)= \operatorname{diam}(T)$. We assume that $(\T_h)_{0<h \leq h_0} $ is regular in the sense that there exists $\rho>0$ such that
\begin{equation}
  \label{regular}
  \operatorname{diam} B_T \geq \rho h(T), \quad \mbox{ for all } T \in \T_h, \, 0 < h \leq h_0,
\end{equation}
where $B_T$ is the largest ball contained in $T$. Let us denote by $X_h$ the space of linear finite elements
\begin{equation*}
  X_h = \lbrace \phi_h \in C^0(\bar{U}) \, | \, \phi_{h |T} \in P_1(T), T \in \mathcal{T}_h \rbrace,
\end{equation*}
and by $I_h: C^0(\bar{U})  \rightarrow X_h$ the usual Lagrange interpolation operator. We have,
for $\eta\in W^{2,p}(U)$
\begin{equation}
  \label{interpol}
  \norm{ \eta - I_h \eta }_{W^{k,p}(T)} \leq C h(T)^{2-k} \norm{ \eta}_{W^{2,p}(T)}, \quad T \in \mathcal{T}_h,
\end{equation}
for $ k=0,1$ and $1 < p \leq  \infty$ with $2-\frac{n+1}{p}>0$.  As a consequence,
\begin{equation}
  \label{interpolphi}
  \norm{ \Phi - I_h \Phi }_{L^{\infty}(U)} + h \norm{ \nabla( \Phi - I_h \Phi) }_{L^{\infty}(U)} \leq C h^2,
\end{equation}
so that we may assume that there exist constants $c_0,c_1$ such that
\begin{equation}
  \label{nablaihd}
  c_0 \leq | \nabla I_h \Phi(x) | \leq c_1, \quad x \in U, 0<h \leq h_0.
\end{equation}
Let us next define
\begin{align*}
  \Gamma_h &  := \lbrace x \in U \, | \, I_h \Phi(x) = 0 \rbrace  \\
  D_h & := \lbrace x \in U \, | \, \abs{ I_h \Phi (x) } < h \rbrace,
\end{align*}
as approximations of the given hypersurface $\Gamma$ and the neighbourhood $D^h:=\lbrace x \in U \, | \, \abs{ \Phi(x) } < h \rbrace$ ; see Figure~\ref{fig:domains} for example. Note that $\Gamma_h$ is a polygon whose facets are line segments if $n=1$ and a polyhedral surface whose facets consist of triangles or quadrilaterals if $n=2$.  The corresponding decomposition of $\Gamma_h$ is in general  not shape regular and can have arbitrary small elements.

\begin{figure}[hbtp]
  \begin{center}
    \includegraphics[width=0.9\textwidth]{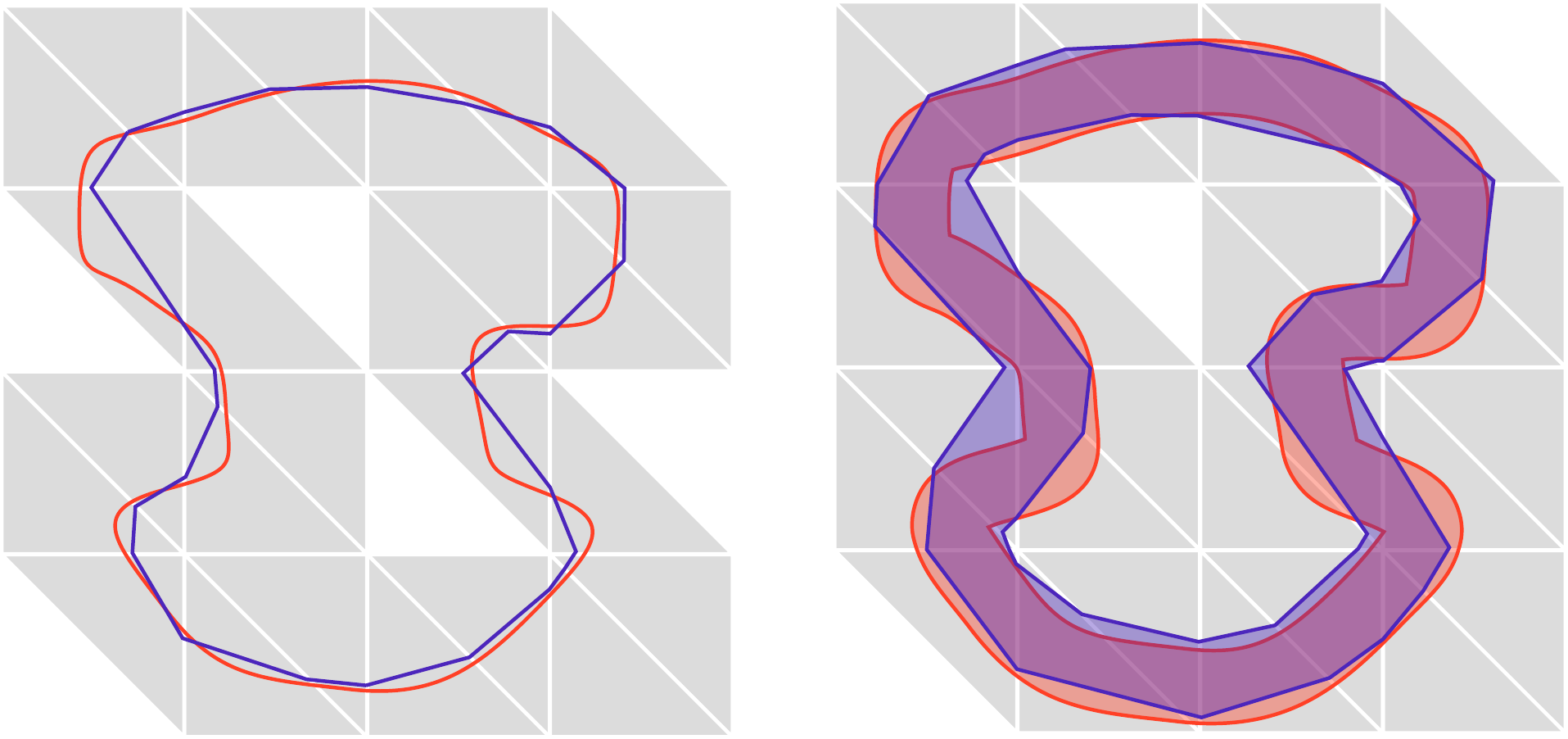}
  \end{center}

  \caption{A cartoon of the domains of the sharp interface (left) and the narrow band (right) method.
    The surface $\Gamma$ resp. the set $D^h$ is displayed in red, the approximations
    $\Gamma_h$ resp. $D_h$ in blue and the domains $U^I_h, U^B_h$ in grey.}
  \label{fig:domains}
\end{figure}

Furthermore, we introduce $F_h:U \rightarrow \R^{n+1}$ by
\begin{equation*}
  F_h(x):= F(p(x), I_h \Phi(x)), \quad x \in U,
\end{equation*}
where $F$ was defined in \eqref{defF}. From the properties of $F$ we infer that
\begin{eqnarray}
  p(F_h(x)) = p(x) \quad \mbox{ and } \quad \Phi(F_h(x)) = I_h \Phi(x), && \mbox{ if } F_h(x) \in U_{\delta}, \label{pdfh} \\
  F_h(x) = p(x), && \mbox{ if } x \in \Gamma_h. \label{fhp}
\end{eqnarray}

The following lemma collects the relevant properties of $F_h$.
\begin{lemma}
  \label{fhproperties}
  There exists $0<h_1 \leq h_0$ such that for $0 < h \leq h_1$ the mapping $F_h:D_h \rightarrow D^h: =  \lbrace x \in U \, | \, \abs{ \Phi(x) } < h \rbrace$ is bilipschitz with $F_h(\Gamma_h)=\Gamma$. Furthermore,
  \begin{align}
    \label{fhest1}
    \norm{ F_h - \id }_{L^{\infty}(U)} + h \norm{ DF_h - I }_{L^{\infty}(U)} & \leq c h^2,  \\
    \label{fhest2}
    \norm{ \abs{ \operatorname{det} DF_h } - \frac{ \abs{ \nabla I_h \Phi } }{ \abs{ \nabla \Phi } }  }_{L^{\infty}(U)}  & \leq  c h^2.
  \end{align}
\end{lemma}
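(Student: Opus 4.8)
The plan is to exploit the explicit formula $F_h(x) = F(p(x), I_h\Phi(x))$ and the smoothness of $F$, together with the expansions of $p$ and its derivatives from \S\ref{surfcalc}. First I would observe that $F_h = F \circ (p, I_h\Phi)$, so by the chain rule
\begin{equation*}
  DF_h(x) = \frac{\partial F}{\partial p}(p(x), I_h\Phi(x)) \, Dp(x) + \frac{\partial F}{\partial s}(p(x), I_h\Phi(x)) \otimes \nabla I_h\Phi(x).
\end{equation*}
Since $F(p,s) = F(p,\Phi(x)) + O(|s - \Phi(x)|)$ in $C^1$ and $|I_h\Phi(x) - \Phi(x)| \leq Ch^2$ by \eqref{interpolphi}, I can compare $DF_h(x)$ with the corresponding derivative of the exact map $x \mapsto F(p(x),\Phi(x)) = x$ (using \eqref{eq:Finv}). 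The difference $F_h(x) - x = F(p(x), I_h\Phi(x)) - F(p(x),\Phi(x))$ is controlled directly by $\|I_h\Phi - \Phi\|_{L^\infty} \leq Ch^2$ and the Lipschitz bound on $\partial F/\partial s$, giving the first term of \eqref{fhest1}. For $\|DF_h - I\|_{L^\infty}$ I would similarly write $DF_h(x) - I = DF_h(x) - D[F(p(\cdot),\Phi(\cdot))](x)$; expanding via the chain rule, the $Dp$ terms cancel up to an error proportional to $|I_h\Phi - \Phi|$ times bounded derivatives of $F$ (hence $O(h^2)$), while the rank-one term contributes $\frac{\partial F}{\partial s} \otimes \nabla(I_h\Phi - \Phi)$, which by \eqref{interpolphi} is $O(h)$. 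Dividing by $h$ as in the statement, this is $O(h)$, as required.

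For the determinant estimate \eqref{fhest2}, I would first record the exact identity for the Jacobian of $F$: since $\Phi(\gamma_p(s)) = s$ and $\gamma_p'(s) = \nabla\Phi/|\nabla\Phi|^2$, differentiating $F^{-1}(x) = (p(x),\Phi(x))$ gives a clean relation. Concretely, $\det DF(p,s)$ equals (up to the tangential Jacobian factor, which drops out in the ratio) a quantity proportional to $|\nabla\Phi(p)| / |\nabla\Phi(F(p,s))|$; more precisely one expects $|\det DF_h(x)|$ to equal $|\nabla\Phi(p(x))| / |\nabla\Phi(F_h(x))|$ times the ratio of surface measures, and using $\Phi(F_h(x)) = I_h\Phi(x)$ from \eqref{pdfh} together with \eqref{nablaphi} to replace $|\nabla\Phi(F_h(x))|$ by $|\nabla\Phi(x)| + O(h)$ and $|\nabla\Phi(p(x))|$ by $|\nabla I_h\Phi(x)| + O(h)$. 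Collecting these, $|\det DF_h(x)| = |\nabla I_h\Phi(x)| / |\nabla\Phi(x)| + O(h)$ at first sight — but the first-order terms must be shown to cancel to upgrade $O(h)$ to $O(h^2)$. This is where the second-order expansion \eqref{pk}–\eqref{pk2} for $p_{k,x_i}$ and the careful form of \eqref{nablaphi} come in: the $O(h)$ contributions from $\det Dp$, from $\nabla I_h\Phi$, and from the mismatch $\nabla\Phi(F_h(x)) - \nabla\Phi(x)$ should combine to cancel, leaving only $O(|I_h\Phi - \Phi|) = O(h^2)$ plus genuinely quadratic terms.

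The bilipschitz claim and $F_h(\Gamma_h) = \Gamma$ are comparatively soft: \eqref{fhp} already gives $F_h(x) = p(x) \in \Gamma$ for $x \in \Gamma_h$, and surjectivity onto $\Gamma$ follows because $p$ restricted to $\Gamma_h$ is onto $\Gamma$ (each ray $s \mapsto F(p,s)$ meets $\Gamma_h$); for general $x \in D_h$, \eqref{pdfh} shows $F_h$ maps into $D^h$, and the estimate $\|DF_h - I\|_{L^\infty} \leq ch$ forces $DF_h$ to be invertible with uniformly bounded inverse for $h \leq h_1$ small, which combined with a degree or inverse-function-theorem argument (or directly, since $F_h$ is a composition of the diffeomorphism $F$ with the bilipschitz map $(p, I_h\Phi)$) yields that $F_h$ is bilipschitz. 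The main obstacle is the cancellation of first-order terms in \eqref{fhest2}: one must track the $O(h)$ pieces through three separate expansions and verify they sum to zero, which requires using the specific structure of the adapted coordinates — in particular the identity $\frac{d}{ds}\Phi(\gamma_p(s)) = 1$ and the fact that $\nabla\Phi$ at $p(x)$ and at $x$ differ precisely by $\Phi(x)G(x)$ from \eqref{nablaphi} — rather than crude bounds. Everything else is a bookkeeping exercise in the chain rule combined with the interpolation estimate \eqref{interpolphi} and the smoothness of $F$, $p$, and $\Phi$ on the compact set $\overline{U_\delta}$.
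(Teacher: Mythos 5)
Your treatment of \eqref{fhest1} and of the mapping properties coincides with the paper's: both rest on $F(p(x),\Phi(x))=x$, the chain rule, and \eqref{interpolphi}, and the bilipschitz property follows from $\norm{DF_h-I}_{L^\infty}\le ch$ for small $h$. The gap is in \eqref{fhest2}. You correctly identify that the individual $O(h)$ contributions must cancel, but you leave this as ``should combine to cancel'' and propose to chase it through a coarea-type factorization $\abs{\det DF_h}\approx \abs{\nabla\Phi(p(x))}/\abs{\nabla\Phi(F_h(x))}\cdot(\text{measure ratio})$, replacing $\abs{\nabla\Phi(p(x))}$ by $\abs{\nabla I_h\Phi(x)}+O(h)$. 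That route introduces several \emph{unrelated} $O(h)$ errors ($\abs{\nabla\Phi(p(x))}-\abs{\nabla\Phi(x)}$ is of order $\Phi(x)$, while $\abs{\nabla I_h\Phi(x)}-\abs{\nabla\Phi(x)}$ is an interpolation error; there is no reason for these to agree to second order), so the cancellation you are hoping for is not there to be found along that path, and no second-order expansion of $p$ rescues it.

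The paper's mechanism is different and purely algebraic, and it is the one missing ingredient you need. From the expansion you already have, $DF_h=I+\frac{\nabla\Phi}{\abs{\nabla\Phi}^2}\otimes\nabla(I_h\Phi-\Phi)+O(h^2)$, the rank-one determinant formula $\det(I+a\otimes b)=1+a\cdot b$ gives
\begin{equation*}
  \abs{\det DF_h}=1+\frac{\nabla\Phi\cdot\nabla(I_h\Phi-\Phi)}{\abs{\nabla\Phi}^2}+O(h^2)
  =\frac{\nabla\Phi\cdot\nabla I_h\Phi}{\abs{\nabla\Phi}^2}+O(h^2),
\end{equation*}
and then the elementary identity
\begin{equation*}
  \frac{\nabla\Phi\cdot\nabla I_h\Phi}{\abs{\nabla\Phi}^2}
  =\frac{\abs{\nabla I_h\Phi}}{\abs{\nabla\Phi}}
  -\frac12\abs{\frac{\nabla I_h\Phi}{\abs{\nabla I_h\Phi}}-\frac{\nabla\Phi}{\abs{\nabla\Phi}}}^2
  \frac{\abs{\nabla I_h\Phi}}{\abs{\nabla\Phi}}
\end{equation*}
shows that the discrepancy from the target $\abs{\nabla I_h\Phi}/\abs{\nabla\Phi}$ is quadratic in the normal defect $\nu_h-\nu$, hence $O(h^2)$ by \eqref{interpolphi} and \eqref{nablaihd}. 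In other words, the point is not that three $O(h)$ terms sum to zero, but that the right-hand side of \eqref{fhest2} is itself $1+O(h)$ and absorbs the single first-order term exactly, up to a quantity quadratic in the angle between $\nabla\Phi$ and $\nabla I_h\Phi$. Without this (or an equivalent) identity your argument does not close.
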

\begin{proof}
  Since $F(p(x),\Phi(x))=x$ we deduce with the help of \eqref{interpolphi}
  \begin{equation*}
    | F_h(x) - x |  =  | F(p(x),I_h \Phi(x)) - F(p(x), \Phi(x)) |  \leq  c | I_h \Phi(x) - \Phi(x) | \leq c h^2.
  \end{equation*}
  Differentiating the relation $F_i(p(x),\Phi(x))=x_i$, $i=1,\ldots,n+1$ we obtain
  \begin{equation*}
    \sum_{k=1}^{n+1} \underline{D}_k F_i(p(x),\Phi(x)) p_{k,x_j}(x) + \frac{\partial F_i}{\partial s}(p(x),\Phi(x)) \Phi_{x_j}(x) = \delta_{ij}, \quad i,j=1,\ldots,n+1,
  \end{equation*}
  and hence
\begin{equation}
  \label{fhixj}
  \begin{aligned}
    F_{hi,x_j}(x) & = \sum_{k=1}^{n+1} \underline{D}_k F_i(p(x),I_h \Phi(x)) p_{k,x_j}(x) + \frac{\partial F_i}{\partial s}(p(x),I_h \Phi(x)) (I_h \Phi)_{x_j}(x) \\
    & = \delta_{ij} + \frac{\partial F_i}{\partial s}(p(x), \Phi(x)) \bigl(I_h \Phi - \Phi \bigr)_{x_j}(x) \\
    & \quad + \sum_{k=1}^{n+1} \Bigl( \underline{D}_k F_i(p(x),I_h \Phi(x)) -  \underline{D}_k F_i(p(x),\Phi(x)) \Bigr)  p_{k,x_j}(x) \\
    & \quad + \Bigl( \frac{\partial F_i}{\partial s}(p(x),I_h \Phi(x)) -  \frac{\partial F_i}{\partial s}(p(x),\Phi(x)) \Bigr) (I_h \Phi)_{x_j}(x) \\
    & = \delta_{ij} + \frac{\Phi_{x_i}(x)}{| \nabla \Phi(x) |^2} \bigl(I_h \Phi - \Phi \bigr)_{x_j}(x) + r_{ij}(x),
  \end{aligned}
\end{equation}
where $|r_{ij}(x)| \leq ch^2$ in view of \eqref{interpolphi}. This implies \eqref{fhest1}. In particular we deduce that $F_h$ is bilipschitz provided that $h$ is sufficiently small, whereas the properties $F_h(D_h)=D^h$ and $F_h(\Gamma_h)=\Gamma$ follow from \eqref{pdfh}. Finally we deduce from (\ref{fhixj}) that
  \begin{align*}
    \abs{ \operatorname{det} DF_h } & = 1 +  \frac{\nabla \Phi}{| \nabla \Phi |^2}  \cdot \nabla (I_h \Phi - \Phi) + c_h =
    \frac{ \nabla \Phi \cdot \nabla I_h \Phi}{| \nabla \Phi |^2}  + c_h \\
    & = \frac{| \nabla I_h \Phi| }{ | \nabla \Phi |} - \frac{1}{2} \abs{  \frac{ \nabla I_h \Phi }{
        | \nabla I_h \Phi| } -  \frac{ \nabla \Phi}{| \nabla \Phi |} }^2 \frac{| \nabla I_h \Phi|}{| \nabla \Phi |}
    + c_h = \frac{| \nabla I_h \Phi| }{ | \nabla \Phi |}  + d_h,
  \end{align*}
  where $\abs{c_h}, \abs{d_h} \leq ch^2$ proving \eqref{fhest2}.
  \qquad
\end{proof}

Next, let us introduce $\mu_h: \Gamma_h \rightarrow \R$ via $\mathrm{d} \sigma(p(x)) = \mu_h(x) \dd \sigma_h(x)$. It is well--known that
\begin{equation}
  \label{measure}
  \abs{ 1 - \mu_h } \le c h^2 \quad \mbox{ on } \Gamma_h.
\end{equation}

Using the properties of $F_h$ together with the coarea formula and \eqref{pk1},\eqref{pk2}, \eqref{gradze}, \eqref{measure} one can prove the following result on the equivalence of certain norms.
\begin{lemma}
  \label{lem:norm-equiv}
  There exist constants $c_1, c_2 > 0$ which are independent of $h$, such that for all $z \in H^1(\Gamma)$
  \begin{align*}
    c_1 \norm{ z^e  }_{L^2(\Gamma_h)} & \le \norm{ z }_{L^2(\Gamma)} \le  c_2 \norm{ z^e  }_{L^2(\Gamma_h)} \\
    c_1 \frac{1}{\sqrt{h}} \norm{ z^e }_{L^2(D_h)} & \le  \norm{ z }_{L^2(\Gamma)} \le c_2 \frac{1}{\sqrt{h}} \norm{ z^e }_{L^2(D_h)} \\
    c_1 \norm{ \nabla z^e  }_{L^2(\Gamma_h)} & \le  \norm{ \nabla_\Gamma z }_{L^2(\Gamma)} \le c_2 \norm{ \nabla z^e }_{L^2(\Gamma_h)} \\
    c_1 \frac{1}{\sqrt{h}} \norm{ \nabla z^e }_{L^2(D_h)} & \le  \norm{ \nabla_\Gamma z }_{L^2(\Gamma)} \le c_2 \frac{1}{\sqrt{h}}  \norm{ \nabla z^e }_{L^2(D_h)}.
  \end{align*}
  If in addition $z \in H^2(\Gamma)$ then
  \begin{equation*}
    c_1 \frac{1}{\sqrt{h}} \norm{ D^2 z^e }_{L^2(D_h)} \le  \norm{ z }_{H^2(\Gamma)}.
  \end{equation*}
\end{lemma}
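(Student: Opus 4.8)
The plan is to reduce every inequality to a change of variables via the bilipschitz map $F_h$ from Lemma~\ref{fhproperties}, combined with the expansions \eqref{gradze}, \eqref{d2ze} of $z^e$ in terms of tangential derivatives of $z$, and then control all geometric factors by $O(h^2)$ perturbations of identities that hold exactly on $\Gamma$. The first pair of inequalities is the model case: for $x \in \Gamma_h$ we have $F_h(x) = p(x)$ by \eqref{fhp}, and by definition of $\mu_h$ together with \eqref{measure} we get $\int_{\Gamma} z^2 \dd \sigma = \int_{\Gamma_h} z(p(x))^2 \mu_h(x) \dd \sigma_h = \int_{\Gamma_h} (z^e)^2 \mu_h \dd \sigma_h$ with $1 - c h^2 \le \mu_h \le 1 + c h^2$; for $h$ small this sandwiches $\norm{z}_{L^2(\Gamma)}$ between fixed multiples of $\norm{z^e}_{L^2(\Gamma_h)}$.

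For the narrow-band $L^2$ estimates I would use the coarea formula on $D_h$ with the function $I_h \Phi$. Writing $D_h = \{ |I_h\Phi| < h\}$, the coarea formula gives $\int_{D_h} (z^e)^2 \dd x = \int_{-h}^{h} \int_{\{I_h\Phi = s\}} (z^e)^2 \frac{1}{|\nabla I_h \Phi|} \dd \sigma_h^s \dd s$. On each level set $\{I_h\Phi = s\}$ one argues as for $\Gamma_h$ but with the map $x \mapsto F(p(x), I_h\Phi(x))$, whose properties \eqref{pdfh} send this level set onto $\{\Phi = s\}$; using \eqref{fhest2} and \eqref{nablaihd} the surface measure and the factor $1/|\nabla I_h\Phi|$ are comparable (up to $h$-independent constants and $O(h^2)$ errors) to $\int_{\{\Phi = s\}} z^2 \dd\sigma$, which in turn is comparable to $\int_\Gamma z^2 \dd\sigma$ uniformly in $|s| < h$ by smoothness of $\Phi$ near $\Gamma$. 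Integrating over $s \in (-h,h)$ produces the factor $2h$, giving the $\frac{1}{\sqrt h}$-scaled equivalence. The same scheme handles the gradient statements, now invoking \eqref{gradze}: since $\nabla z^e(x) = (I + \Phi(x) A(x)) \nabla_\Gamma z(p(x))$ and $|\Phi(x)| \le |\Phi|$ is $O(h)$ on $D_h$ (and $O(h^2)$ on $\Gamma_h$ by \eqref{interpolphi}, since $\Phi = \Phi - I_h\Phi$ there), the matrix $I + \Phi A$ is uniformly invertible with inverse bounded independently of $h$, so $|\nabla z^e(x)|$ and $|\nabla_\Gamma z(p(x))|$ are pointwise comparable; one then repeats the coarea/measure-comparison argument.

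For the $H^2$ bound one differentiates \eqref{gradze} once more, or equivalently uses that the second derivatives of $z^e$ are, by \eqref{pk1}–\eqref{pk2} and the chain rule, a smooth-coefficient combination of $\underline D_l \underline D_k z(p(x))$ and $\underline D_k z(p(x))$, with coefficients bounded uniformly in $h$ on $U_\delta$. Hence $|D^2 z^e(x)| \le C\big( |D^2 z|(p(x)) + |\nabla_\Gamma z|(p(x)) \big)$ pointwise on $D_h \subset U_\delta$ (valid once $h \le h_1$ so that $D_h \subset U_\delta$), and the coarea formula plus the measure comparison again yield $\frac{1}{\sqrt h}\norm{D^2 z^e}_{L^2(D_h)} \le C\big( \norm{D^2 z}_{L^2(\Gamma)} + \norm{\nabla_\Gamma z}_{L^2(\Gamma)} \big) \le C\norm{z}_{H^2(\Gamma)}$; only one direction is claimed, so no invertibility is needed here.

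The main obstacle is the uniform-in-$s$ comparison of $\int_{\{\Phi = s\}} w \dd\sigma$ with $\int_\Gamma w^e \dd\sigma$ (equivalently, controlling the Jacobian of the flow map $F(\cdot, s)$ between $\Gamma$ and the level set $\{\Phi = s\}$) together with the bookkeeping that turns the coarea integral into a clean $2h$ factor; everything else is a routine perturbation argument once the pointwise bounds from \eqref{gradze}, \eqref{pk1}–\eqref{pk2} and Lemma~\ref{fhproperties} are in hand. One should also be careful that the constants $c_1, c_2$ here are being re-used (they already appeared in \eqref{nablaihd}), but since they are only claimed to exist this is harmless.
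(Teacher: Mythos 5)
Your proposal is correct and follows exactly the route the paper indicates for this lemma (the paper gives no detailed proof, only the remark that it follows from the properties of $F_h$, the coarea formula, and \eqref{pk1}, \eqref{pk2}, \eqref{gradze}, \eqref{measure}): the $\Gamma_h$ statements via $\dd\sigma(p(x))=\mu_h\dd\sigma_h$ with $\abs{1-\mu_h}\le ch^2$, the $D_h$ statements via the coarea/fibration argument producing the $2h$ factor, the gradient equivalences via the uniform invertibility of $I+\Phi A$ on $D_h$, and the one-sided $H^2$ bound via \eqref{pk1}--\eqref{pk2}. Your identification of the uniform-in-$s$ Jacobian comparison as the only nontrivial bookkeeping matches what the paper later records as \eqref{transf1}.
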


\subsection{Variational form of elliptic equation and Strang's second lemma}

It is well--known \cite{Aub82} that for every $f \in L^2(\Gamma)$ there exists a unique solution $u \in H^2(\Gamma)$ of \eqref{eq:poisson} which satisfies
\begin{equation}
  \label{eq:regularity}
  \norm{ u }_{H^2(\Gamma)} \le c \norm{ f }_{L^2(\Gamma)}.
\end{equation}

Let us write \eqref{eq:poisson} in weak form
\begin{equation}
  \label{eq:weak-form}
  a(u,\varphi) = l(\varphi) \qquad \mbox{ for all } \varphi \in H^1(\Gamma),
\end{equation}
where
\begin{equation*}
  a( w, \varphi ) = \int_\Gamma \bigl( \nabla_\Gamma w \cdot \nabla_\Gamma \varphi + w \varphi \bigr) \dd \sigma,
  \quad
  l( \varphi ) = \int_\Gamma f \varphi \dd \sigma.
\end{equation*}

Next, suppose that $V_h$ is a finite--dimensional space and $V^e:= \lbrace v^e \, | \, v \in H^1(\Gamma) \rbrace$. Assume that $a_h: (V_h +  V^e) \times (V_h + V^e) \rightarrow \R$ is a symmetric, positive semidefinite bilinear form  which is in addition positive definite on $V_h \times V_h$.  Furthermore, let $l_h: V_h \rightarrow \R$ be linear. Then the approximate problem
\begin{equation}
  \label{approxeq}
  a_h(u_h,v_h) = l_h(v_h) \qquad \mbox{ for all } v_h \in V_h
\end{equation}
has a unique solution $u_h \in V_h$. Introducing
\begin{equation*}
  \norm{ v }_h := \sqrt{a_h(v,v)}, \qquad v \in V_h+V^e
\end{equation*}
we have by Strang's Second Lemma
\begin{equation}
  \label{strang}
  \norm{ u^e - u_h }_h \leq 2 \inf_{v_h \in V_h} \norm{ u^e - v_h }_h + \sup_{\phi_h \in V_h} \frac{ | a_h(u^e,\phi_h) - l_h(\phi_h) |}{\norm{ \phi_h }_h}.
\end{equation}

In the following two sections we shall present two differenct choices of $a_h$ and $l_h$ along with the corresponding analysis of the resulting schemes.


\section{Sharp interface method (\emph{SIF})}
\label{SIF}

\subsection{Setting up the method}

Let us begin by observing that if $T \in \T_h$ satisfies $\Hm^n( T \cap \Gamma_h ) > 0$, then the following two cases can occur:
\begin{enumerate}
\item $\Gamma_h \cap \mbox{int}(T) \neq \emptyset$, in which case $\Hm^n (\partial T \cap \Gamma_h)=0$;
\item $T \cap \Gamma_h = \partial T \cap \Gamma_h$ in which case $T \cap
\Gamma_h$ is the face between two elements.
\end{enumerate}
We may now define a unique subset $\T^I_h \subset \T_h$ by taking all elements satisfying case 1 and in case 2 taking just one of the two  elements $T$. The numerical method does not depend on which element is chosen. We may therefore conclude that there exists $N \subset \Gamma_h$ with $\Hm^n (N)=0$ and a subset $\T^I_h \subset \T_h$ such that every $x \in \Gamma_h \setminus N$ belongs to exactly one $T \in \T^I_h$. We then define
\begin{equation*}
  U_h^I = \bigcup_{T \in \T_h^I} T.
\end{equation*}
Clearly  $U_h^I \subseteq U_{\delta}$ provided that $h$ is small enough.  We define the finite element space $V_h^I$ by
\begin{equation*}
  V_h^I = \{ \phi_h \in C^0(U_h^I)  \, | \, \phi_{h|T} \in P_1(T) \mbox{ for each }  T \in \T_h^I \}.
\end{equation*}
Note that $\nabla \phi_h$ is defined on $\Gamma_h \setminus N$ in view of the definition of $\T_h^I$. In particular the unit normal $\nu_h$ to $\Gamma_h$ is given by
\begin{equation}
  \label{nuh}
  \nu_h = \frac{ \nabla I_h \Phi}{| \nabla I_h \Phi |} \quad \mbox{ on } \Gamma_h \setminus N,
\end{equation}
and we use \eqref{nuh} in order to extend $\nu_h$ to $U^I_h$. Let us next turn to the approximation error for the space $V_h^I$. Note that for a function  $z \in H^2(\Gamma)$ we have $z^e \in C^0(\bar{U}_{\delta})$ so that $I_h z^e$ is well--defined.

\begin{lemma}
  \label{interpol1}
  Let $z \in H^2(\Gamma)$. Then
  \begin{equation}
    \norm{ z^e - I_h z^e }_{L^2(\Gamma_h)}   + h \norm{ \nabla ( z^e - I_h z^e ) }_{L^2(\Gamma_h)}
    \le   c h^2 \norm{ z }_{H^2(\Gamma)}.
  \end{equation}
\end{lemma}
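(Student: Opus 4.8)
The plan is to transfer the standard bulk interpolation estimate \eqref{interpol} from $\R^{n+1}$ onto the polyhedral surface $\Gamma_h$, element by element. First I would fix $T \in \T_h^I$ and work on $T \cap \Gamma_h$. The key point is that $\Gamma_h \cap T$ is a flat piece (a line segment if $n=1$, a polygon lying in the zero level set of the affine function $I_h\Phi|_T$ if $n=2$), so the surface $L^2$-norm over $T \cap \Gamma_h$ of a function is controlled by a trace-type inequality by the full bulk $W^{2,p}$-norm over $T$. Concretely, for $\eta := z^e - I_h z^e$ one has on each $T$ the scaled trace estimate $\norm{\eta}_{L^2(T\cap\Gamma_h)}^2 \le C\bigl(h(T)^{-1}\norm{\eta}_{L^2(T)}^2 + h(T)\norm{\nabla\eta}_{L^2(T)}^2\bigr)$, and similarly for $\nabla\eta$ with one more derivative. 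Summing over $T \in \T_h^I$ and invoking \eqref{interpol} with $p=2$ (valid since $n=1,2$ so that $2-\tfrac{n+1}{2}>0$) gives
\begin{equation*}
  \norm{\eta}_{L^2(\Gamma_h)}^2 + h^2\norm{\nabla\eta}_{L^2(\Gamma_h)}^2 \le C h^4 \sum_{T\in\T_h^I}\norm{z^e}_{W^{2,2}(T)}^2 \le C h^4 \norm{z^e}_{H^2(U_\delta)}^2.
\end{equation*}

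Second, I would close the argument by bounding $\norm{z^e}_{H^2(U_\delta)}$ — or rather $\norm{z^e}_{H^2(U_h^I)}$, since $U_h^I \subseteq U_\delta$ — by $c\norm{z}_{H^2(\Gamma)}$. This is where the surface calculus of \S\ref{surfcalc} does the work: the formula \eqref{gradze} expresses $\nabla z^e$ as $(I + \Phi A)\nabla_\Gamma z(p(x))$ with $A$ smooth and $\Phi$ bounded on $U_\delta$, and differentiating once more (together with \eqref{pk1}, \eqref{pk2}) gives that $D^2 z^e$ is a bounded-coefficient combination of $(\underline D_l \underline D_k z)(p(x))$ and $(\underline D_k z)(p(x))$. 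Using the coarea formula with the change of variables $x \mapsto (p(x),\Phi(x))$ on $U_\delta$ — whose Jacobian is bounded above and below uniformly — one converts the bulk integrals of these quantities into integrals over $\Gamma \times (-\delta,\delta)$, yielding $\norm{z^e}_{H^2(U_\delta)} \le c\sqrt{\delta}\,\norm{z}_{H^2(\Gamma)}$. In fact the last inequality of Lemma \ref{lem:norm-equiv} already packages precisely this fact for the $D^2$ term over $D_h$, and the lower-order terms are covered by the earlier lines of that lemma, so I would simply cite Lemma \ref{lem:norm-equiv} (adjusting the band $D_h$ versus $U_\delta$ by the trivial inclusion and a fixed constant).

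I expect the main obstacle to be the trace inequality on elements $T$ that $\Gamma_h$ cuts in a geometrically degenerate way: because the induced decomposition of $\Gamma_h$ need not be shape-regular, $T \cap \Gamma_h$ can be an arbitrarily small sliver or meet $T$ at a bad angle, so one must be careful that the constant $C$ in the scaled trace estimate depends only on the shape-regularity constant $\rho$ of the bulk mesh \eqref{regular} and not on how $\Gamma_h$ sits inside $T$. The cleanest route is to note that $\Gamma_h \cap T$ lies in a hyperplane, bound its $n$-dimensional measure by $C h(T)^n$, and use that the restriction of a $P_1$ perturbation argument plus the standard anisotropic trace theorem on a reference simplex transfers correctly under the affine map $\hat T \to T$; the regularity \eqref{regular} is exactly what makes this affine map well-conditioned. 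Everything else is routine: the interpolation bound \eqref{interpol}, summation over the finitely-overlapping elements of $\T_h^I$, and the citation of Lemma \ref{lem:norm-equiv} for the geometric norm equivalence.
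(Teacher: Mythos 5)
Your route is genuinely different from the paper's. The paper does not prove the $L^2$ and tangential-gradient bounds itself: it cites Theorem~3.7 of \cite{OlsReuGra09} for
$\norm{z^e - I_h z^e}_{L^2(\Gamma_h)} + h\norm{\nabla_{\Gamma_h}(z^e - I_h z^e)}_{L^2(\Gamma_h)}$ and then only estimates the remaining normal component $\nabla(z^e - I_h z^e)\cdot\nu_h$ directly, splitting it into $\nabla z^e\cdot(\nu_h-\nu)$ (using $\nabla z^e\cdot\nu=0$ and $\abs{\nu-\nu_h}\le ch$) and a term $\nabla(I_h z^e)\cdot\nu_h$, the latter pulled back to a bulk integral via $\Hm^n(T\cap\Gamma_h)\le ch(T)^{-1}\Hm^{n+1}(T)$. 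Your element-wise scaled trace inequality treats the full gradient in one stroke, is self-contained, and your discussion of why the trace constant depends only on the bulk shape-regularity $\rho$ (the cut $T\cap\Gamma_h$ lies in a hyperplane, so the sliver geometry is harmless) correctly identifies and resolves the one genuine geometric subtlety. Both approaches work.

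However, your power counting in the displayed estimate is off by one factor of $h$, and the way you propose to repair it would not succeed as stated. The trace inequality combined with \eqref{interpol} gives
$\norm{\eta}_{L^2(T\cap\Gamma_h)}^2 \le C\bigl(h^{-1}\cdot h^4 + h\cdot h^2\bigr)\norm{z^e}_{H^2(T)}^2 = Ch^3\norm{z^e}_{H^2(T)}^2$,
and likewise $h^2\norm{\nabla\eta}_{L^2(T\cap\Gamma_h)}^2 \le Ch^3\norm{z^e}_{H^2(T)}^2$; so after summation you have $Ch^3\sum_T\norm{z^e}_{H^2(T)}^2$, not $Ch^4$. The missing power of $h$ must come from the fact that $U_h^I\subset D_{c_1h}$ is an $O(h)$-band, so that $\sum_T\norm{z^e}_{H^2(T)}^2 \le \norm{z^e}_{H^2(D_{c_1h})}^2 \le ch\norm{z}_{H^2(\Gamma)}^2$ by the $\tfrac{1}{\sqrt h}$-scaled inequalities of Lemma~\ref{lem:norm-equiv}. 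Your closing remark that the difference between $D_h$ and $U_\delta$ is ``a trivial inclusion and a fixed constant'' is exactly the wrong way round: bounding the sum by $\norm{z^e}_{H^2(U_\delta)}^2 \le c\,\delta\norm{z}_{H^2(\Gamma)}^2$ yields only $\norm{\eta}_{L^2(\Gamma_h)}\le ch^{3/2}\norm{z}_{H^2(\Gamma)}$, which is suboptimal. The proof closes only because the elements of $\T_h^I$ occupy a set of measure $O(h)$; once you invoke the band version of Lemma~\ref{lem:norm-equiv} (as the paper does in its final line, and as Lemma~\ref{interpol2} does for the narrow-band analogue), your argument is complete.
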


\begin{proof}
 We first observe that Theorem~3.7 in \cite{OlsReuGra09} yields
  \begin{equation}
    \norm{ z^e - I_h z^e }_{L^2(\Gamma_h)}
    + h \norm{ \nabla_{\Gamma_h} ( z^e - I_h z^e ) }_{L^2(\Gamma_h)}
    \le c h^2 \norm{ z }_{H^2(\Gamma)}.
  \end{equation}
  Hence, it remains to bound $ \norm{ \nabla (z^e - I_h z^e ) \cdot \nu_h }_{L^2(\Gamma_h)}$. To do so, we start by considering an element $T \in \T_h^I$. Then we see that
  \begin{align*}
    & \int_{T \cap \Gamma_h} \abs{ \nabla (z^e - I_h z^e) \cdot \nu_h }^2  \dd \sigma_h \\
    & \quad \leq 2 \int_{T \cap \Gamma_h} \abs{ \nabla z^e \cdot \nu_h }^2 \dd \sigma_h
    + 2 \int_{T \cap \Gamma_h} \abs{ \nabla (I_h z^e) \cdot \nu_h }^2 \dd \sigma_h \\
    & \quad \leq 2  \int_{T \cap \Gamma_h} \abs{ \nabla z^e \cdot ( \nu_h - \nu) }^2 \dd \sigma_h +
    c h(T)^{-1} \int_T \abs{ \nabla (I_h z^e) \cdot \nu_h }^2 \dd x
    = : I_1 + I_2,
  \end{align*}
  in view of \eqref{zetang} and the fact that $\Hm^{n}(T \cap \Gamma_h) \leq c h(T)^{-1} \Hm^{n+1}(T)$. Note that by \eqref{nuh} and \eqref{interpolphi}
  \begin{equation}
    \label{numinnuh}
    \norm{ \nu - \nu_h }_{L^{\infty}(T)} = \norm{ \frac{\nabla \Phi}{ \abs{ \nabla \Phi } } - \frac{\nabla I_h \Phi}{ \abs{ \nabla  I_h \Phi } } }_{L^{\infty}(T)} \leq c h(T)
  \end{equation}
  so that
  \begin{equation*}
    I_1 \le c h^2 \int_{T \cap \Gamma_h} \abs{ \nabla z^e }^2 \dd \sigma_h.
  \end{equation*}
  Furthermore, recalling \eqref{interpol} and using again \eqref{numinnuh}
  \begin{equation*}
    I_2   \le c h(T)^{-1} \int_T \bigl( \abs{ \nabla z^e \cdot (\nu_h - \nu) }^2 +
    \abs{ \nabla (z^e - I_h z^e ) }^2 \bigr)  \dd x   \le  c h \norm{ z^e }_{H^2(T)}^2.
  \end{equation*}
  We use the bounds for $I_1, I_2$ and sum over all elements $T \in \T_h^I$, then apply Lemma~\ref{lem:norm-equiv} to see
  \begin{equation*}
    \int_{\Gamma_h} \abs{ \nabla (z^e - I_h z^e) \cdot \nu_h }^2 \dd \sigma_h
    \le c h^2 \Vert \nabla z^e \Vert_{L^2(\Gamma_h)}^2 + c h \norm{ z^e }_{H^2(D_{c_1 h})}^2
    \le c h^2 \norm{ z }_{H^2(\Gamma)}^2,
  \end{equation*}
  since $T \subset D_{c_1 h}$ for all $T \in \mathcal T_h^I$ in view of \eqref{nablaihd}.
  \qquad
\end{proof}

\subsection{The method}

Let us write \eqref{sim} in the form:- Find $u_h \in V_h^I$ such that
\begin{equation}
  \label{sim1}
  a_h(u_h,\phi_h) = l_h(\phi_h)
  \quad \mbox{ for all } \phi_h \in V_h^I,
\end{equation}
where
\begin{equation*}
  a_h ( w_h, \phi_h )
  = \int_{\Gamma_h} \bigl( \nabla w_h \cdot \nabla \phi_h + w_h \phi_h \bigr) \dd \sigma_h,
  \quad
  l_h ( \phi_h )
  = \int_{\Gamma_h} f^e \phi_h \dd \sigma_h.
\end{equation*}

In order to verify that the symmetric bilinear form $a_h$ is positive definite on $V_h^I \times V_h^I$ we note that $a_h(\phi_h,\phi_h) = 0$ implies that
\begin{equation*}
  \int_{\Gamma_h \cap T} \bigl( \abs{ \nabla \phi_h }^2 + \phi_h^2 \bigr) \dd \sigma_h = 0 \quad \mbox{ for all } T \in \T_h^I.
\end{equation*}
Since $\Hm^n( T \cap \Gamma_h ) > 0$ for $T \in \T_h^I$ we infer that $\nabla \phi_h=0$ and hence $\phi_h$ is constant on these elements. Using again that $\Hm^n( T \cap \Gamma_h ) > 0$  we deduce that $\phi_h=0$ on each $T \in \T_h^I$ so that $\phi_h \equiv 0$ in $V_h^I$. Hence \eqref{sim1} has a unique solution $u_h \in V_h^I$ which satisfies
\begin{equation}
  \norm{ u_h }_h = \bigl( \norm{ \nabla u_h }_{L^2(\Gamma_h)}^2 + \norm{ u_h }_{L^2(\Gamma_h)}^2 \bigr)^{\frac{1}{2}}
  \leq c \norm{ f^e }_{L^2(\Gamma_h)}   \le c \norm{ f }_{L^2(\Gamma)}.
\end{equation}
\begin{remark}
The right hand side $l_{h}(\cdot)$ may be defined using other sufficiently accurate extensions of $f$.
\end{remark}
\subsection{Error analysis}

\begin{theorem}
  \label{thm:line-bound}
  Let $u$ be the solution of \eqref{eq:poisson} and $u_h$ the solution of the finite element scheme \eqref{sim1}. Then
  \begin{equation}
    \norm{ u^e - u_h }_{L^2(\Gamma_h)} + h \norm{ \nabla (u^e - u_h) }_{L^2(\Gamma_h)}
    \le c h^2 \norm{ f }_{L^2(\Gamma)}.
  \end{equation}
\end{theorem}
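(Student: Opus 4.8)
The plan is to split the estimate into an energy (i.e.\ $\norm{\cdot}_h$) bound obtained from Strang's Lemma~\eqref{strang}, and an $L^2(\Gamma_h)$ bound obtained by an Aubin--Nitsche duality argument. The unifying observation — and the reason the full gradient can be used — is that the energy bound already forces the discrete normal derivative $\nabla u_h\cdot\nu_h$ to be of size $O(h)$, and this is precisely what is needed to absorb the extra terms generated by replacing $\nabla_{\Gamma_h}$ by $\nabla$.

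\textbf{Step 1: energy estimate.} I would apply \eqref{strang}. For the best-approximation term take $v_h=I_h u^e$ and combine Lemma~\ref{interpol1} with the regularity bound \eqref{eq:regularity} to get $\inf_{v_h}\norm{u^e-v_h}_h\le ch\norm{u}_{H^2(\Gamma)}\le ch\norm{f}_{L^2(\Gamma)}$. For the consistency functional $E_h(\phi_h):=a_h(u^e,\phi_h)-l_h(\phi_h)$ I would decompose $\nabla u^e\cdot\nabla\phi_h=\nabla_{\Gamma_h}u^e\cdot\nabla_{\Gamma_h}\phi_h+(\nabla u^e\cdot\nu_h)(\nabla\phi_h\cdot\nu_h)$. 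The tangential part together with the zeroth order and load terms is exactly the consistency functional of the scheme of \cite{OlsReuGra09}, which is $O(h^2)\norm{f}_{L^2(\Gamma)}\norm{\phi_h}_h$ by the geometric estimates of \S\ref{prelim} (using $\abs{\Phi}\le ch^2$ on $\Gamma_h$, $\mu_h=1+O(h^2)$ from \eqref{measure}, \eqref{gradze}, and the PDE \eqref{eq:poisson}). The remaining term is controlled, via $\nabla u^e\cdot\nu=0$ (see \eqref{zetang}) and $\norm{\nu-\nu_h}_{L^\infty}\le ch$ (see \eqref{numinnuh}), by $ch\norm{u}_{H^1(\Gamma)}\norm{\nabla\phi_h\cdot\nu_h}_{L^2(\Gamma_h)}\le ch\norm{f}_{L^2(\Gamma)}\norm{\phi_h}_h$. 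Hence $\norm{u^e-u_h}_h\le ch\norm{f}_{L^2(\Gamma)}$; this already gives $h\norm{\nabla(u^e-u_h)}_{L^2(\Gamma_h)}\le ch^2\norm{f}_{L^2(\Gamma)}$, and as a by-product $\norm{\nabla(u^e-u_h)\cdot\nu_h}_{L^2(\Gamma_h)}\le ch\norm{f}_{L^2(\Gamma)}$.

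\textbf{Step 2: duality.} Write $e_h:=u^e-u_h$ and let $e_h^\ell$ be its lift to $\Gamma$ along $p$. Let $w\in H^2(\Gamma)$ solve $a(w,\varphi)=\int_\Gamma e_h^\ell\varphi\dd\sigma$ for all $\varphi\in H^1(\Gamma)$, so $\norm{w}_{H^2(\Gamma)}\le c\norm{e_h^\ell}_{L^2(\Gamma)}$ by \eqref{eq:regularity}. Taking $\varphi=e_h^\ell$ and using the symmetry of $a_h$ together with $a_h(u_h,I_hw^e)=l_h(I_hw^e)$,
\[
  \norm{e_h^\ell}_{L^2(\Gamma)}^2=a(w,e_h^\ell)=\bigl[a(w,e_h^\ell)-a_h(w^e,e_h)\bigr]+a_h(e_h,w^e-I_hw^e)+E_h(I_hw^e).
\]
The second term is $\le\norm{e_h}_h\norm{w^e-I_hw^e}_h\le ch^2\norm{f}_{L^2(\Gamma)}\norm{w}_{H^2(\Gamma)}$ by Step~1 and Lemma~\ref{interpol1}. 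For $E_h(I_hw^e)$ I would again split off the normal part: the tangential part is the $O(h^2)$ consistency functional of \cite{OlsReuGra09} at $I_hw^e$, and the normal part is $\le\norm{\nabla u^e\cdot\nu_h}_{L^2(\Gamma_h)}\norm{\nabla(I_hw^e)\cdot\nu_h}_{L^2(\Gamma_h)}$, whose first factor is $O(h)\norm{u}_{H^1(\Gamma)}$ by \eqref{zetang}, \eqref{numinnuh} and whose second factor is $O(h)\norm{w}_{H^2(\Gamma)}$ (split $\nabla(I_hw^e)\cdot\nu_h=\nabla(I_hw^e-w^e)\cdot\nu_h+\nabla w^e\cdot\nu_h$ and use \eqref{interpol} and $\nabla w^e\cdot\nu=0$). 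For the geometric consistency term I would change variables from $\Gamma_h$ to $\Gamma$ along $p$: the mass terms agree up to $\mu_h=1+O(h^2)$; the tangential-gradient terms agree up to $O(h^2)$ since the $O(h)$ error $\nu-\nu_h$ enters the relevant projection products quadratically (via \eqref{pk1}, \eqref{gradze}, $\abs{\Phi}\le ch^2$ on $\Gamma_h$); and the normal-gradient term $\int_{\Gamma_h}(\nabla w^e\cdot\nu_h)(\nabla e_h\cdot\nu_h)\dd\sigma_h$ is $\le\norm{\nabla w^e\cdot\nu_h}_{L^2(\Gamma_h)}\norm{\nabla e_h\cdot\nu_h}_{L^2(\Gamma_h)}\le ch\norm{w}_{H^1(\Gamma)}\cdot ch\norm{f}_{L^2(\Gamma)}$, the last factor being the by-product of Step~1. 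Collecting everything and using $\norm{w}_{H^2(\Gamma)}\le c\norm{e_h^\ell}_{L^2(\Gamma)}$ yields $\norm{e_h^\ell}_{L^2(\Gamma)}^2\le ch^2\norm{f}_{L^2(\Gamma)}\norm{e_h^\ell}_{L^2(\Gamma)}+ch^2\norm{e_h^\ell}_{L^2(\Gamma)}^2$; absorbing the last term for small $h$ gives $\norm{e_h^\ell}_{L^2(\Gamma)}\le ch^2\norm{f}_{L^2(\Gamma)}$, hence $\norm{u^e-u_h}_{L^2(\Gamma_h)}\le ch^2\norm{f}_{L^2(\Gamma)}$ by Lemma~\ref{lem:norm-equiv} (or \eqref{measure}), which with Step~1 completes the proof.

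\textbf{Main obstacle.} The delicate point throughout is the \emph{normal} component of the full gradient, which naively contributes only at order $O(h)$ and would destroy optimality. The way around it is that this factor never appears alone: in $E_h$ it is paired with $\nu-\nu_h$ or with the gradient of an interpolation error, each $O(h)$; and in the geometric consistency term it is paired with $\nabla(u^e-u_h)\cdot\nu_h$, which is $O(h)$ thanks to the energy estimate of Step~1. One also relies on the (standard but not completely trivial) fact that the geometric error in pulling the tangential-gradient form back from $\Gamma_h$ to $\Gamma$ is $O(h^2)$, i.e.\ that the $O(h)$ normal perturbation of $\Gamma_h$ cancels to leading order.
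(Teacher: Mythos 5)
Your proposal is correct and follows essentially the same route as the paper: Strang's lemma plus Lemma~\ref{interpol1} for the energy estimate, a consistency bound whose only $O(h)$ contribution is the normal-derivative term, and the identical three-term Aubin--Nitsche decomposition in which that term becomes $O(h^2)$ when tested with $I_h w^e$ because $\nabla w^e\cdot\nu=0$. The only (inessential) difference is organizational: you split the consistency error orthogonally into tangential and normal parts relative to $\nu_h$ and cite the $O(h^2)$ geometric consistency of \cite{OlsReuGra09} for the tangential part, whereas the paper derives the same structure directly by pulling back through $F_h$ and the explicit formula \eqref{eq:DFh-T}, which isolates the normal contribution as $\frac{1}{\abs{\nabla\Phi}}(\nabla u^e\cdot\nabla\eta_h)(\nabla\phi_h\cdot\nu)$ with $\eta_h=I_h\Phi-\Phi$.
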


\begin{proof}
  In view of the definition of $\norm{ \cdot }_h$, \eqref{strang} and  Lemma~\ref{interpol1} we have for $e_h:=u^e-u_h$
\begin{eqnarray}
\lefteqn{\hspace{1cm}  \bigl(  \norm{ e_h }^2_{L^2(\Gamma_h)} + \norm{ \nabla e_h }^2_{L^2(\Gamma_h)} \bigr)^{\frac{1}{2}} } \label{ee} \\
& \leq &  2 \bigl(  \norm{ u^e - I_h u^e }^2_{L^2(\Gamma_h)} + \norm{ \nabla (u^e - I_h u^e) }^2_{L^2(\Gamma_h)} \bigr)^{\frac{1}{2}}
    + \sup_{\phi_h \in V_h^I} \frac{ | a_h(u^e,\phi_h) - l_h(\phi_h) |}{\Vert \phi_h \Vert_h} \nonumber  \\
& \leq &  c h \Vert u \Vert_{H^2(\Gamma)} + \sup_{\phi_h \in V_h^I} \frac{ | a_h(u^e,\phi_h) - l_h(\phi_h) |}{\Vert \phi_h \Vert_h}. \nonumber
\end{eqnarray}
  In order to estimate the second term we let $\phi_h \in V_h^I$ be arbitrary and define $\varphi_h:= \phi_h \circ F_h^{-1}$.  Then
  \begin{equation*}
    a_h(u^e,\phi_h) - l_h(\phi_h) = \bigl( a_h(u^e,\phi_h)-a(u,\varphi_h) \bigr) + \bigl( l(\varphi_h) -
    l_h(\phi_h) \bigr)  \equiv  I + II.
  \end{equation*}
 
  Using the transformation rule and \eqref{gradze} we obtain
\begin{eqnarray}
\lefteqn{ \hspace{-1.5cm}   \int_\Gamma \bigl( \nabla_\Gamma u \cdot \nabla_\Gamma \varphi_h + u \varphi_h \bigr) \dd \sigma
  = \int_{\Gamma_h} \bigl( (\nabla_{\Gamma} u) \circ p \cdot  (\nabla_{\Gamma} \varphi_h) \circ p + (u \circ p) \, (\varphi_h \circ p) \bigl) \mu_h \dd \sigma_h } \nonumber \\
& = &  \int_{\Gamma_h} \bigl(   (I + \Phi A)^{-1} \nabla u^e \cdot (I + \Phi A)^{-1} \nabla \varphi_h^e + u^e \, \varphi_h^e  \bigr) \mu_h 
\dd \sigma_h. \label{ahma}
\end{eqnarray}
Since $\varphi_h^e( x ) = \varphi_h(p(x))= \phi_h( F_h^{-1}( p( x ) ) )$ we derive
  \begin{align*}
    \nabla \varphi_h^e ( x )
    & = [ D p( x ) ]^T [ DF_h^{-1} ( p ( x ) ) ]^T \nabla \phi_h ( F_h^{-1} ( p ( x ) ) ) \\
    & = [ D p( x ) ]^T [ DF_h ( F_h^{-1} ( p ( x ) ) ) ]^{-T}\nabla \phi_h ( F_h^{-1}( p ( x ) ) ).
  \end{align*}
  We infer from (\ref{fhixj}) that
  \begin{equation}
    \label{eq:DFh-T} 
    (DF_h)^{-T} = I - \frac{1}{| \nabla \Phi |} \nabla \eta_h \otimes \nu + B_h, \quad \mbox{ with } \abs{ B_h } \leq c h^2,
  \end{equation}
where $\eta_h = I_h \Phi - \Phi$. It follows from (\ref{fhp}) that $F_h^{-1}(p(x))=x, x \in \Gamma_h$, which together with (\ref{pk1})
implies
\begin{equation*}
\nabla \varphi_h^e  = ( I - \nu \otimes \nu ) ( I - \frac{1}{\abs{ \nabla \Phi }} \nabla \eta_h \otimes \nu ) \nabla \phi_h + q_h  \quad \mbox{ on } \Gamma_h, \quad 
| q_h | \leq c h^2 | \nabla \phi_h |.
\end{equation*}
Taking into account that $\nabla u^e \cdot \nu = 0$ we therefore have 
  \begin{equation*}
  \nabla u^e \cdot \nabla \varphi_h^e  = \nabla u^e \cdot \nabla \phi_h - \frac{1}{\abs{\nabla \Phi}} ( \nabla u^e \cdot \nabla \eta_h ) ( \nabla \phi_h \cdot \nu ) + \nabla u^e \cdot q_h \quad \mbox{ on } \Gamma_h.
 \end{equation*}
If we insert this relation into (\ref{ahma}) and recall the definition of $a_h$ we find that
\begin{eqnarray*}
\abs{ I } & \le &  \int_{\Gamma_h} \bigl( \abs{ \nabla u^e \cdot \nabla \phi_h - \mu_h (I + \Phi A)^{-T} ( I + \Phi A)^{-1}  \nabla u^e \cdot 
\nabla \varphi_h^e} +  \abs{ ( \mu_h -1 )u^e \varphi_h^e } \bigr) \dd \sigma_h \\
& \le &  c h^2 \norm{ u^e }_h \norm{ \phi_h }_h + c h \Vert u^e \Vert_h  \int_{\Gamma_h} | ( \nabla \phi_h \cdot \nu ) |  \dd \sigma_h
\end{eqnarray*}
where we used \eqref{interpolphi}, \eqref{measure} and the fact that $\varphi_h^e=\phi_h$ on $\Gamma_h$. Similarly,
  \begin{align*}
    \abs{ II } & = \abs{ \int_\Gamma f \varphi_h \dd \sigma - \int_{\Gamma_h} f^e \phi_h \dd \sigma_h } \leq \int_{\Gamma_h} \abs{ 1 - \mu_h } |  f^e | \, |  \phi_h | \dd \sigma_h \\
    & \le c h^2 \norm{ f^e }_{L^2(\Gamma_h)} \norm{ \phi_h }_h \leq c h^2 \norm{ f }_{L^2(\Gamma)} \norm{ \phi_h }_h.
  \end{align*}

Combining these estimates with \eqref{eq:regularity} we have
\begin{equation} \label{supest}
| a_h(u^e,\phi_h) - l_h(\phi_h) | \leq c h^2 \Vert f \Vert_{L^2(\Gamma)}\norm{ \phi_h }_h 
+ c h \Vert f \Vert_{L^2(\Gamma)} \int_{\Gamma_h}  \abs{  \nabla \phi_h \cdot \nu  } \dd \sigma_h
\end{equation}
for all $\phi_h \in V^I_h$, which inserted into (\ref{ee}) yields
  \begin{equation}
    \label{ehest0}
    \norm{ e_h }_{L^2(\Gamma_h)} + \norm{ \nabla e_h }_{L^2(\Gamma_h)} \leq ch \norm{ f }_{L^2(\Gamma)}.
  \end{equation}
  In order to improve the $L^2$-error bound we employ the usual Aubin-Nitsche argument. Denote by $w \in H^2(\Gamma)$ the solution of the dual problem
  \begin{equation*}
    a( \varphi, w) = \int_{\Gamma} \tilde{e}_h \varphi \dd \sigma \qquad \mbox{ for all } \varphi \in H^1(\Gamma), \quad 
\mbox{ with } \tilde{e}_h= e_h \circ F_h^{-1},
  \end{equation*}
  which satisfies
  \begin{equation}
    \label{eq:dual-reg}
    \norm{ w }_{H^2(\Gamma)} \le c \norm{ \tilde{e}_h }_{L^2(\Gamma)}.
  \end{equation}
  We have in view of \eqref{sim}
  \begin{equation}
    \label{ehest1}
    \begin{aligned}
      \norm{  \tilde{e}_h }_{L^2(\Gamma)}^2 & = a(\tilde{e}_h,w) = \bigl( a(\tilde{e}_h,w) - a_h(e_h,w^e) \bigr) \\
      & \quad + a_h(e_h,w^e-I_h w^e) + \bigl( a_h(u^e,I_h w^e) - l_h(I_h w^e) \bigr) \\
      & \equiv I + II + III.
    \end{aligned}
  \end{equation}
  Similarly as above we deduce with the help of \eqref{ehest0} and Lemma~\ref{lem:norm-equiv}
  \begin{equation*}
    \abs{ I } \leq c h \norm{ e_h }_h \norm{ w^e }_h \leq c h^2 \norm{ f }_{L^2(\Gamma)} \norm{ w }_{H^1(\Gamma)}.
  \end{equation*}
  Next, Lemma~\ref{interpol1} and \eqref{ehest0} imply
  \begin{equation*}
    \abs{ II } \leq \norm{ e_h }_h \norm{ w^e - I_h w^e }_h \leq c h^2 \norm{ f }_{L^2(\Gamma)} \norm{ w }_{H^2(\Gamma)}.
  \end{equation*}
  Finally, \eqref{supest}, the fact that $\nabla w^e \cdot \nu=0$  and Lemma~\ref{interpol1} yield
  \begin{eqnarray*}
    \abs{ III } &  \leq &  c h^2 \norm{ f }_{L^2(\Gamma)} \norm{ I_h w^e }_h + c h \norm{ f }_{L^2(\Gamma)} 
\int_{\Gamma_h} \abs{  \nabla (I_h w^e - w^e) \cdot \nu  } \dd \sigma_h \\ 
 &  \leq &  c h^2 \norm{ f }_{L^2(\Gamma)} \norm{ w }_{H^2(\Gamma)}.
  \end{eqnarray*}
 Inserting the above estimates into \eqref{ehest1} and recalling \eqref{eq:dual-reg} we obtain
  \begin{equation*}
  \norm{ \tilde{e}_h}_{L^2(\Gamma)} \le c h^2 \norm{ f }_{L^2(\Gamma)},
  \end{equation*}
  which together with Lemma \ref{lem:norm-equiv} completes the proof  since $\tilde{e}_h^e=e_h$ on $\Gamma_h$.
  \end{proof}


\section{Narrow band method}
\label{NBM}

\subsection{Setting up the method}

Let us consider for $D_h = \lbrace x \in U_\delta \, | \, \abs{ I_h \Phi(x) } <h \rbrace$ the set
\begin{equation*}
  \T^B_h = \{ T \in \T_h \, | \,  \Hm^{n+1}( T \cap D_h ) > 0 \},
\end{equation*}
along with
\begin{equation*}
  U^B_h = \bigcup_{T \in \T^B_h} T.
\end{equation*}
We define the finite element space $V_h^B$ on the triangulation $\T^B_h$  by
\begin{equation*}
  V_h^B = \{ \phi_h \in C^0(U^B_h) \, | \,
  \phi_{h|T} \in P_1(T) \mbox{ for each } T \in \T^B_h \}.
\end{equation*}
Let us first examine the approximation error for the space $V_h^B$.
\begin{lemma}
  \label{interpol2}
  We have for each function $z \in H^2(\Gamma)$:
  \begin{equation}
    \frac{1}{\sqrt{h}} \norm{ z^e - I_h z^e }_{L^2(D_h)} +  \sqrt{h} \norm{ \nabla ( z^e - I_h z^e ) }_{L^2(D_h)}
    \le  c h^2 \norm{ z }_{H^2(\Gamma)}.
  \end{equation}
\end{lemma}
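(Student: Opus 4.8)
The plan is to reduce the claim to the standard element-wise interpolation estimate summed over the band triangulation $\T_h^B$, and then to absorb the weights $h^{-1/2}$ and $h^{1/2}$ by using that $U_h^B$ is contained in a tubular neighbourhood of $\Gamma$ of width $O(h)$, on which $z^e$ has $H^2$-norm of order $\sqrt{h}\,\norm{z}_{H^2(\Gamma)}$.

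\textbf{Step 1 (element-wise interpolation).} Since $z \in H^2(\Gamma)$ we have $z^e \in H^2(U_\delta)\cap C^0(\bar U_\delta)$, so $I_h z^e$ is well defined and \eqref{interpol} applies with $p=2$ (admissible because $2-\tfrac{n+1}{2}>0$ for $n=1,2$). On each $T\in\T_h^B$ this gives $\norm{z^e-I_hz^e}_{L^2(T)}\le c\,h(T)^2\norm{z^e}_{H^2(T)}$ and $\norm{\nabla(z^e-I_hz^e)}_{L^2(T)}\le c\,h(T)\,\norm{z^e}_{H^2(T)}$. Squaring, summing over $T\in\T_h^B$, and noting $D_h\subset U_h^B$, one obtains
\[
  \frac{1}{\sqrt{h}}\,\norm{z^e-I_hz^e}_{L^2(D_h)} + \sqrt{h}\,\norm{\nabla(z^e-I_hz^e)}_{L^2(D_h)} \le c\,h^{3/2}\,\norm{z^e}_{H^2(U_h^B)}.
\]
Thus it suffices to show $\norm{z^e}_{H^2(U_h^B)}\le c\,\sqrt{h}\,\norm{z}_{H^2(\Gamma)}$.

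\textbf{Step 2 (geometric inclusion and bound on the tube).} Every $T\in\T_h^B$ meets $D_h=\{\,\abs{I_h\Phi}<h\,\}$ and has $\operatorname{diam}T\le h$; since $\abs{\nabla I_h\Phi}\le c_1$ by \eqref{nablaihd}, it follows that $\abs{I_h\Phi}<(1+c_1)h$ on $U_h^B$, and then \eqref{interpolphi} gives $U_h^B\subset\{\,x\in U:\abs{\Phi(x)}<c_2h\,\}=:D^{c_2h}$ for $h$ small. On $D^{c_2h}$ I would bound $\norm{z^e}_{H^2}$ exactly as in the proof of Lemma~\ref{lem:norm-equiv}: by the coarea formula for $\Phi$, $\int_{D^\rho}g\dd x=\int_{-\rho}^{\rho}\bigl(\int_{\{\Phi=s\}}g\,\abs{\nabla\Phi}^{-1}\dd\Hm^n\bigr)\dd s$ for $\rho$ small, and on each level set $\{\Phi=s\}$ the map $p$ is a diffeomorphism onto $\Gamma$ with Jacobian bounded above and below uniformly in $\abs{s}<\delta$. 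Writing $z^e(x)=z(p(x))$, using $\nabla z^e=(I+\Phi A)\nabla_\Gamma z(p(\cdot))$ from \eqref{gradze}, and (differentiating \eqref{gradze} once more, via \eqref{pk1}--\eqref{pk2}) expressing $D^2z^e$ as a combination with bounded smooth coefficients of $\underline D_l\underline D_k z(p(\cdot))$ and $\underline D_k z(p(\cdot))$, the inner integrals of $\abs{z^e}^2$, $\abs{\nabla z^e}^2$, $\abs{D^2z^e}^2$ over $\{\Phi=s\}$ are each $\le c\,\norm{z}_{H^2(\Gamma)}^2$ uniformly in $s$; integrating over $\abs{s}<c_2h$ contributes the factor $c_2h$, so $\norm{z^e}_{H^2(D^{c_2h})}\le c\,\sqrt{h}\,\norm{z}_{H^2(\Gamma)}$. (Equivalently, one may invoke the $O(h)$-width analogue of the first and last inequalities of Lemma~\ref{lem:norm-equiv} together with \eqref{gradze}.) Combining with Step~1 finishes the proof.

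The only non-routine point is the second-derivative term: since $z$ lies only in $H^2(\Gamma)$, $z^e$ cannot be differentiated beyond second order, so one genuinely needs the $H^2$-analogue of the expansions \eqref{pk1}, \eqref{pk2}, \eqref{gradze} — namely that $D^2z^e$ is pointwise controlled by the tangential Hessian and gradient of $z$ at $p(x)$ with $h$-independent coefficients. This is precisely the content underlying the last inequality of Lemma~\ref{lem:norm-equiv}, so in the write-up I would quote that lemma (in its $O(h)$-band form) rather than redo the computation; the interpolation estimate, the summation, and the inclusion $U_h^B\subset D^{c_2h}$ are all standard.
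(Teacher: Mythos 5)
Your proposal is correct and follows essentially the same route as the paper: element-wise application of \eqref{interpol}, summation over the elements meeting $D_h$, the containment of those elements in an $O(h)$-band via \eqref{nablaihd}, and the $H^2$ norm-equivalence of Lemma~\ref{lem:norm-equiv} to gain the factor $\sqrt{h}$. The paper states this in three lines (using the band $D_{(1+c_1)h}$ defined by $I_h\Phi$ rather than your $\Phi$-band $D^{c_2h}$, an immaterial difference); your Step~2 merely unpacks the coarea argument that the paper delegates to Lemma~\ref{lem:norm-equiv}.
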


\begin{proof}
  We infer from \eqref{interpol} and Lemma~\ref{lem:norm-equiv} that
  \begin{align*}
    & \frac{1}{h} \Vert z^e - I_h z^e \Vert_{L^2(D_h)}^2 + h \Vert \nabla (z^e - I_h z^e) \Vert_{L^2(D_h)}^2 \\
    & \quad \leq \sum_{T \cap D_h \neq \emptyset} \bigl( \frac{1}{h} \Vert z^e - I_h z^e \Vert_{L^2(T)}^2 + h
    \Vert \nabla (z^e - I_h z^e) \Vert_{L^2(T)}^2 \bigr)  \\
    & \quad \leq c h^3 \sum_{T \cap D_h \neq \emptyset} \Vert z^e \Vert_{H^2(T)}^2 \leq c h^3 \Vert z^e \Vert_{H^2(D_{(1+c_1)h})}^2
    \leq c h^4 \Vert z \Vert_{H^2(\Gamma)}^2,
  \end{align*}
  since $T \subset D_{(1+c_1)h}$ for all $T \cap D_h \neq \emptyset$ in view of \eqref{nablaihd}.
  \qquad
\end{proof}

\subsection{The method}

Let us write \eqref{nbm} in the form:- Find $u_h \in V_h^B$ such that
\begin{equation}
  \label{nbm1}
  a_h(u_h,\phi_h) = l_h(\phi_h) \quad \mbox{ for all } \phi_h \in V_h^B,
\end{equation}
where
\begin{align*}
  a_h( w_h, \phi_h )
  & = \frac{1}{2 h} \int_{D_h} \bigl( \nabla w_h \cdot \nabla \phi_h + w_h \phi_h \bigr) \abs{ \nabla I_h \Phi } \dd x, \\
  l_h (\phi_h)
  & = \frac{1}{2 h} \int_{D_h} f^e \phi_h \abs{ \nabla I_h \Phi } \dd x.
\end{align*}

Note that the factors $\frac{1}{h}$ in each of the above terms are there  to aid the notation for
the error analysis. In a similar way as for the sharp interface method one can verify that $a_h$ is positive definite on $V_h^B \times V_h^B$.
Hence, the finite element scheme \eqref{nbm1} has a unique solution $u_h \in V_h^B$ which satisfies
\begin{equation}
  \label{eq:here}
  \norm{ u_h }_h =
  \left( \frac{1}{2 h} \int_{D_h} \bigl( \abs{ \nabla u_h }^2 + u_h^2 \bigr) | \nabla I_h \Phi |  \dd x \right)^{\frac12}
  \le c \norm{ f }_{L^2(\Gamma)}.
\end{equation}
\begin{remark}
The right hand side $l_{h}(\cdot)$ may be defined using other sufficiently accurate extensions of $f$.
\end{remark}

\subsection{Error analysis}

Before we prove our main error bound we formulate a technical lemma which will be helpful in the error analysis.

\begin{lemma}
  \label{aux}
  Suppose that $u \in H^2(\Gamma)$ is  a solution of (\ref{eq:poisson}). Then,
  \begin{equation}
    a_h(u^e,\phi) = \frac{1}{2h} \int_{D^h} f^e \phi \circ F_h^{-1} \abs{ \nabla \Phi} \dd x + \frac{1}{2h} \int_{D_h} ( \nabla u^e \cdot \nabla \eta_h ) ( \nabla \phi \cdot \nu ) \frac{ \abs{ \nabla I_h \Phi} }{ \abs{ \nabla \Phi } } \dd x + \langle S, \phi \rangle,
  \end{equation}
  for all $\phi \in H^1(D_h)$, where $\eta_h = I_h \Phi - \Phi$ and
  \begin{equation*}
    \abs{ \langle S, \phi \rangle } \leq C h^2 \norm{ u }_{H^2(\Gamma)} \norm{ \phi }_h.
  \end{equation*}
\end{lemma}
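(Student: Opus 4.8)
The plan is to rewrite the bilinear form $a_h(u^e,\phi)$ by pushing everything through the change of variables $y=F_h(x)$ from $D_h$ onto $D^h$, then using the identity $-\Delta_\Gamma u + u = f$ on $\Gamma$ together with the expansions (\ref{d2ze}), (\ref{gradze}) for $\nabla u^e$ and $\frac{1}{|\nabla\Phi|}\nabla\cdot(|\nabla\Phi|\nabla u^e)$ to recognise the leading term. Concretely, I would start from
\begin{equation*}
  a_h(u^e,\phi) = \frac{1}{2h} \int_{D_h} \bigl( \nabla u^e \cdot \nabla \phi + u^e \phi \bigr) \abs{ \nabla I_h \Phi } \dd x
\end{equation*}
and integrate by parts in the first term. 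Because $\phi \in H^1(D_h)$ need not vanish on $\partial D_h$, a boundary term on $\partial D_h = \{ |I_h\Phi| = h\}$ appears; this is where the weighting by $\abs{\nabla I_h\Phi}$ and the coarea structure pay off — I expect the boundary contributions at $I_h\Phi = +h$ and $I_h\Phi = -h$ to be handled together with the divergence term so that, after writing $\abs{\nabla I_h\Phi}\nabla u^e$ in terms of $\abs{\nabla\Phi}\nabla u^e$ plus an $O(h)$ correction (using $\nabla I_h\Phi = \nabla\Phi + \nabla\eta_h$ and (\ref{zetang})), the divergence $\nabla\cdot(\abs{\nabla\Phi}\nabla u^e)$ gets replaced by $\abs{\nabla\Phi}(\Delta_\Gamma u)\circ p$ up to $O(h)$-terms via (\ref{d2ze}).

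The key algebraic step is the splitting $\nabla u^e\cdot\nabla\phi = \nabla u^e\cdot\nabla\phi - (\nabla u^e\cdot\nu)(\nabla\phi\cdot\nu) + (\nabla u^e\cdot\nu)(\nabla\phi\cdot\nu)$, where the middle term vanishes by (\ref{zetang}); but one must be careful that when written with the $I_h\Phi$-weight one actually gets the term $\frac{1}{2h}\int_{D_h}(\nabla u^e\cdot\nabla\eta_h)(\nabla\phi\cdot\nu)\frac{\abs{\nabla I_h\Phi}}{\abs{\nabla\Phi}}\dd x$ displayed in the statement. This term arises precisely because $\nu_h = \nabla I_h\Phi/\abs{\nabla I_h\Phi}$ differs from $\nu$ by $O(h)$ and $\nabla u^e$ is tangential to the \emph{exact} level sets, not the discrete ones. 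I would isolate it explicitly and relegate everything else — the $O(h^2)$ geometric perturbations from (\ref{fhest1}), (\ref{fhest2}), from $\mu_h$-type Jacobian factors, from the $\Phi\cdot(\cdots)$ remainders in (\ref{d2ze}), and the difference between integrating $f^e\phi\circ F_h^{-1}\abs{\nabla\Phi}$ over $D^h$ versus the transformed integral over $D_h$ — into the functional $\langle S,\phi\rangle$.

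The bound $\abs{\langle S,\phi\rangle}\le Ch^2\norm{u}_{H^2(\Gamma)}\norm{\phi}_h$ then follows by Cauchy–Schwarz on $D_h$: each error contribution carries an explicit factor $h^2$ (or $h$ against a gradient that is itself paired with another $O(h)$ factor), and one controls $\frac{1}{\sqrt h}\norm{\nabla u^e}_{L^2(D_h)}$, $\frac{1}{\sqrt h}\norm{D^2 u^e}_{L^2(D_h)}$, $\frac{1}{\sqrt h}\norm{u^e}_{L^2(D_h)}$ by $\norm{u}_{H^2(\Gamma)}$ using Lemma~\ref{lem:norm-equiv} and the regularity (\ref{eq:regularity}), while $\frac{1}{\sqrt h}\norm{\nabla\phi}_{L^2(D_h)}$ and $\frac{1}{\sqrt h}\norm{\phi}_{L^2(D_h)}$ are bounded by $\norm{\phi}_h$ thanks to (\ref{nablaihd}). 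I expect the main obstacle to be bookkeeping: correctly tracking the boundary term on $\partial D_h$ produced by integration by parts and verifying that, after changing variables and using the coarea formula with the diffeomorphism $F_h$ of Lemma~\ref{fhproperties}, it either cancels against the divergence-term boundary contribution or is absorbed into $S$ with the right power of $h$ — this is the one place where the $O(h)$ (rather than $O(h^2)$) discrepancy between $\Gamma_h$-normals and $\Gamma$-normals could, if mishandled, spoil the estimate, and it is exactly why the explicit $(\nabla u^e\cdot\nabla\eta_h)(\nabla\phi\cdot\nu)$ term must be kept out of $S$.
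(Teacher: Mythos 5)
There are two genuine gaps here. The first is structural: you integrate by parts over $D_h$, whose boundary consists of level sets of $I_h\Phi$, so the conormal derivative $\nabla u^e\cdot\nu_h$ does \emph{not} vanish there --- by \eqref{zetang} it equals $\nabla u^e\cdot(\nu_h-\nu)=\frac{1}{\abs{\nabla I_h\Phi}}\nabla u^e\cdot\nabla\eta_h = O(h)\abs{\nabla u^e}$, and after multiplying by the prefactor $\frac{1}{2h}$ and integrating over the $O(1)$-measure set $\partial D_h$ the resulting boundary term is of size $O(1)\norm{u}_{H^1(\Gamma)}\norm{\phi}_h$, i.e.\ not small at all. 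You acknowledge this but only ``expect'' it to be handled; exhibiting the cancellation (it has to be played off against the displayed $(\nabla u^e\cdot\nabla\eta_h)(\nabla\phi\cdot\nu)$ volume term) is essentially the whole difficulty, and your route must additionally cope with the weight $\abs{\nabla I_h\Phi}$ being only piecewise constant, so that $\nabla\cdot(\abs{\nabla I_h\Phi}\nabla u^e)$ carries interelement jump contributions. The paper runs the argument in the opposite order: it multiplies the extended strong equation \eqref{extendedeq} by $\phi\circ F_h^{-1}\abs{\nabla\Phi}$ and integrates by parts on the \emph{exact} band $D^h$, where $\partial u^e/\partial\nu = 0$ on $\partial D^h$ kills the boundary term identically, and only then pushes the resulting first-order integrals forward to $D_h$ via $F_h$; the $\nabla\eta_h$ correction then falls out of the Jacobian $(DF_h)^{-T}$ in \eqref{eq:DFh-T}, not out of an add-and-subtract of a term that is zero.

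The second gap is quantitative and would actually spoil the stated bound. The remainder in \eqref{d2ze} carries a single factor $\Phi(x)$, which on $D^h$ is only $O(h)$, and it is paired with $\phi$ itself rather than with another small quantity; Cauchy--Schwarz together with Lemma~\ref{lem:norm-equiv} therefore gives only $\frac{1}{2h}\abs{\int_{D^h} R\,\phi\circ F_h^{-1}\abs{\nabla\Phi}\dd x}\le Ch\norm{u}_{H^2(\Gamma)}\norm{\phi}_h$ --- one power of $h$ short. Your claim that every error contribution carries an explicit $h^2$, or an $h$ paired with another $O(h)$ factor, fails precisely here. The paper recovers the missing power by rewriting the integral over $D^h$ in the fibre coordinates $(p,s)\in\Gamma\times(-h,h)$ of \eqref{transf1}, observing that $R(F(p,s))$ is $s$ times a quantity evaluated along the normal fibre, and exploiting the moment cancellation $\int_{-h}^h s\dd s=0$ by subtracting the value at $s=0$; the fundamental theorem of calculus along the fibre then supplies the extra factor of $h$, at the cost of bringing $\nabla\phi$ into the estimate. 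Without this transversal averaging argument the lemma only holds with $h$ in place of $h^2$, and the $L^2$ superconvergence in Theorem~\ref{thm:band-bound} is lost.
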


\begin{proof}
  To begin, we derive from \eqref{d2ze} and \eqref{eq:poisson} that
  \begin{equation}
    \label{extendedeq}
    - \frac{1}{| \nabla \Phi|} \nabla \cdot \bigl( | \nabla \Phi | \, \nabla u^e \bigr) + u^e = f^e + R \quad \mbox{ in } U_{\delta},
  \end{equation}
  where
  \begin{equation}
    \label{rest}
    R(x) = - \Phi(x) \Bigl( \sum_{k,l=1}^{n+1} b_{lk}(x) \underline{D}_l \underline{D}_k u(p(x)) + \sum_{k=1}^{n+1} c_k(x)
    \underline{D}_k u(p(x)) \Bigr).
  \end{equation}

  We multiply \eqref{extendedeq}  by $\phi \circ F_h^{-1} | \nabla \Phi |, \phi \in H^1(D_h)$ and integrate over $D^h$. Since $\dfrac{\partial u^e}{\partial \nu} = 0 $ on $\partial D^h$ we obtain after integration by parts
  \begin{equation}
    \label{transf}
    \begin{aligned}
      & \int_{D^h} \nabla u^e \cdot \nabla ( \phi \circ  F_h^{-1}) | \nabla \Phi | \, \dd x + \int_{D^h} u^e \, \phi \circ F_h^{-1}
      | \nabla \Phi | \, \dd x \\
      & \qquad = \int_{D^h} f^e \, \phi \circ F_h^{-1} | \nabla \Phi | \dd x + \int_{D^h} R \, \phi \circ F_h^{-1}
      | \nabla \Phi | \, \dd x.
    \end{aligned}
  \end{equation}
  Observing that $\nabla ( \phi \circ F_h^{-1}) = [(DF_h)^{-T} \circ F_h^{-1}] \nabla \phi \circ F_h^{-1}$ the transformation rule and Lemma~\ref{fhproperties} imply that
  \begin{equation*}
    I :=  \int_{D^h} \nabla u^e \cdot \nabla ( \phi \circ  F_h^{-1}) | \nabla \Phi | \, \dd x =
    \int_{D_h}  \nabla u^e \circ F_h \cdot (DF_h)^{-T} \nabla \phi \, | \nabla \Phi \circ F_h | \, \abs{ \operatorname{det} DF_h } \dd x.
  \end{equation*}
  Recalling \eqref{extension} and \eqref{pdfh} we have
  \begin{equation}
    \label{zetrans}
    z^e(x) = z(p(x)) = z(p(F_h(x)) = z^e(F_h(x)),
  \end{equation}
  from which we deduce by differentiation
  \begin{equation}
    \label{uefh}
    \nabla z^e \circ F_h = (DF_h)^{-T} \nabla z^e,
  \end{equation}
  so that
  \begin{equation*}
    I = \int_{D_h} ( D F_h )^{-T} \nabla u^e \cdot ( D F_h )^{-T} \nabla \phi \, \abs{ \nabla \Phi \circ F_h } \abs{ \operatorname{det} D F_h } \dd x.
  \end{equation*}
  Recalling \eqref{eq:DFh-T}, we find with the help of $\nabla u^e \cdot \nu=0$ that
  \begin{equation*}
    (DF_h)^{-T} \nabla u^e = \nabla u^e + B_h \nabla u^e, \quad (DF_h)^{-T} \nabla \phi = \nabla \phi - \frac{1}{| \nabla \Phi |} (\nabla \phi \cdot \nu)
    \nabla \eta_h + B_h \nabla \phi,
  \end{equation*}
  where $\abs{ B_h } \le c h^2$. Furthermore,  Lemma~\ref{fhproperties} implies that
  \begin{equation*}
    | \nabla \Phi \circ F_h | \, \abs{ \operatorname{det} DF_h } = | \nabla I_h \Phi | + \gamma_h, \quad \mbox{ where } | \gamma_h | \leq c h^2,
  \end{equation*}
  so that in conclusion
  \begin{equation*}
    I = \int_{D_h} \nabla u^e \cdot \nabla \phi \, \abs{ \nabla I_h \Phi } \dd x - \int_{D_h} ( \nabla u^e \cdot \nabla \eta_h)
    ( \nabla \phi \cdot \nu) \frac{| \nabla I_h \Phi|}{| \nabla \Phi|}  \dd x
    + \langle R^1_h,\phi \rangle,
  \end{equation*}
  where
  \begin{equation*}
    \abs{ \langle R^1_h,\phi \rangle } \leq c h^2 \norm{ \nabla u^e }_{L^2(D_h)} \norm{ \nabla \phi }_{L^2(D_h)} \leq c h^3  \norm{ u }_{H^2(\Gamma)} \norm{ \phi }_h,
  \end{equation*}
  in view of Lemma~\ref{lem:norm-equiv} and the definition of $\norm{ \cdot }_h$. Similarly, \eqref{zetrans} and \eqref{fhest2} yield
  \begin{equation*}
    \int_{D^h} u^e  \, \phi \circ F_h^{-1} | \nabla \Phi | \,   \dd x  =  \int_{D_h} u^e \phi \, \abs{ \nabla I_h \Phi } \dd x + \langle R^2_h,\phi \rangle
  \end{equation*}
  with $\abs{ \langle R^2_h,\phi \rangle } \leq c h^3 \norm{ u }_{H^2(\Gamma)} \norm{ \phi }_h$. Inserting the  above identities into \eqref{transf} and dividing by $2h$ we derive
  \begin{equation}
    \label{ahlh}
    \begin{aligned}
      a_h(u^e,\phi)  & = \frac{1}{2h} \int_{D^h} f^e \, \phi \circ F_h^{-1} | \nabla \Phi | \,\dd x + \frac{1}{2 h}
      \int_{D^h} R \, \phi \circ F_h^{-1} | \nabla \Phi | \,  \dd x \\
      & \qquad + \frac{1}{2h} \int_{D_h} ( \nabla u^e \cdot \nabla \eta_h) ( \nabla \phi \cdot \nu) \frac{| \nabla I_h \Phi |}{
        | \nabla \Phi |}  \dd x - \frac{1}{2h} \langle R^1_h,\phi \rangle - \frac{1}{2h} \langle R^2_h,\phi \rangle.
    \end{aligned}
  \end{equation}
  In order to rewrite the integral over $D^h$ we recall that $F$ is a diffeomorphism from $\Gamma \times (-h,h)$ onto $D^h$. It is not difficult to see that
  \begin{equation}
    \label{transf1}
    \mathrm{d} x = \mu(p,s) \dd \sigma_p \dd s, \;
    \mbox{ where } \abs{ \mu(p,s) - \frac{1}{| \nabla \Phi(F(p,s))| } }\leq C \abs{s}, \;
    \abs{s} < h, p \in \Gamma.
  \end{equation}
  Hence,
  \begin{align*}
    & \int_{D^h} R \, \phi \circ F_h^{-1} | \nabla \Phi | \, \dd x
    = \int_{-h}^h \int_{\Gamma} R \circ F \,  \phi \circ F_h^{-1} \circ F \; | \nabla \Phi \circ F | \, \mu  \dd \sigma_p \dd s \\
    & \quad = \int_{-h}^h \int_{\Gamma} R \circ F \, \phi \circ F_h^{-1} \circ F  \dd \sigma_p \dd s
    +  \int_{-h}^h \int_{\Gamma} \tilde{r} \,  \phi \circ F_h^{-1} \circ F  \dd \sigma_p \dd s \\
    & \quad \equiv T_1 + T_2,
  \end{align*}
  where  $\tilde{r}(p,s) = R(F(p,s)) \bigl( \mu(p,s) \, | \nabla \Phi(F(p,s))| - 1  \bigr)$.
  In order to treat $T_1$ we deduce from \eqref{rest} and the fact that $\Phi(F(p,s))=s$ that
  \begin{equation*}
    R(F(p,s))  = - s \Bigl( \sum_{k,l=1}^{n+1} b_{lk}(F(p,s)) \underline{D}_l \underline{D}_k u(p) + \sum_{k=1}^{n+1} c_k(F(p,s))
    \underline{D}_k u(p) \Bigr).
  \end{equation*}
  Since $\int_{-h}^h s \dd s =0$, the first term in $T_1$ can be written as
  \begin{equation*}
    -  \int_{-h}^h \int_{\Gamma} \sum_{k,l=1}^{n+1} s \underline{D}_l \underline{D}_k u(p) \Big\{ b_{lk} (F(p,s)) \, \phi \circ F_h^{-1}(F(p,s))
    - b_{lk}(p) \, \phi \circ F_h^{-1}(p) \Big\} \dd \sigma_p \dd s.
  \end{equation*}
  Treating the second term in $T_1$ in the same way and observing that $p=F(p,0)$ we deduce with the help of the fundamental theorem of calculus that
  \begin{equation*}
    \abs{ T_1 } \leq c h^{\frac{5}{2}} \norm{ u }_{H^2(\Gamma)} \left( \int_{D^h} \Bigl( \abs{ \nabla \phi \circ F_h^{-1} }^2 + \abs{ \phi \circ F_h^{-1} }^2 \Bigr) \dd x \right)^{\frac12}
    \leq ch^3 \norm{ u }_{H^2(\Gamma)} \norm{ \phi }_h.
  \end{equation*}
  Next, we infer from \eqref{rest} and \eqref{transf1} that
  \begin{equation*}
    \abs{ \tilde{r}(p,s) } \leq c s^2 \bigl( \abs{ \nabla_{\Gamma} u(p) } + \abs{ D^2_{\Gamma} u(p) } \bigr),
  \end{equation*}
  so that
  \begin{equation*}
    \abs{ T_2 } \leq C h^{\frac{5}{2}} \norm{ u }_{H^2(\Gamma)} \left( \int_{D^h} \abs{ \phi \circ F_h^{-1} }^2  \dd x \right)^{\frac12}
    \leq C h^3 \norm{ u }_{H^2(\Gamma)} \norm{ \phi }_h.
  \end{equation*}

  The result now follows from \eqref{ahlh} together with the bounds on $R^1_h$ and $R^2_h$.
  \qquad
\end{proof}

We are now in position to prove optimal error bounds for our scheme.

\begin{theorem}
  \label{thm:band-bound}
  Let $u$ be the solution of \eqref{eq:poisson} and $u_h$ the solution of the finite element scheme \eqref{nbm1}. Then
  \begin{equation}
    \norm{ u^e- u_h }_{L^2(\Gamma_h)} + h  \left( \frac{1}{2 h} \int_{D_h} \abs{ \nabla (u^e - u_h) }^2 \abs{ \nabla I_h \Phi } \dd x \right)^{\frac12}
    \leq c h^2 \norm{ f }_{L^2(\Gamma)}.
  \end{equation}
\end{theorem}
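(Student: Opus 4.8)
The plan is to follow the blueprint of the proof of Theorem~\ref{thm:line-bound}: Strang's Second Lemma \eqref{strang} for the gradient part of the estimate, and an Aubin--Nitsche duality argument with a dual problem posed on $\Gamma$ for the $L^2(\Gamma_h)$ part.

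\emph{Energy estimate.} Apply \eqref{strang} to \eqref{nbm1}. The best--approximation term is bounded by $\norm{u^e-I_hu^e}_h$, which by \eqref{nablaihd}, Lemma~\ref{interpol2} and the regularity bound \eqref{eq:regularity} is $\le ch\norm f_{L^2(\Gamma)}$. For the consistency term I would use Lemma~\ref{aux}: subtracting $l_h(\phi_h)$ and transforming the integral over $D^h$ back to $D_h$ by $x=F_h(y)$ — so that $f^e\circ F_h=f^e$ by \eqref{pdfh} and $\abs{\nabla\Phi\circ F_h}\abs{\det DF_h}=\abs{\nabla I_h\Phi}+O(h^2)$ by Lemma~\ref{fhproperties} — yields, with $\eta_h:=I_h\Phi-\Phi$,
\begin{equation*}
  \abs{a_h(u^e,\phi_h)-l_h(\phi_h)}\le ch^2\norm f_{L^2(\Gamma)}\norm{\phi_h}_h+\frac{c}{2h}\norm{\nabla u^e}_{L^2(D_h)}\norm{\nabla\eta_h}_{L^\infty(U)}\norm{\nabla\phi_h\cdot\nu}_{L^2(D_h)}.
\end{equation*}
Since $\norm{\nabla\eta_h}_{L^\infty(U)}\le ch$ by \eqref{interpolphi} and $\norm{\nabla u^e}_{L^2(D_h)}\le c\sqrt h\norm f_{L^2(\Gamma)}$ by Lemma~\ref{lem:norm-equiv} and \eqref{eq:regularity}, the consistency term is $\le ch\norm f_{L^2(\Gamma)}$, hence $\norm{u^e-u_h}_h\le ch\norm f_{L^2(\Gamma)}$; this is already the gradient part of the asserted bound. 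It is important to keep the refined estimate above, in which $\norm{\nabla\phi_h\cdot\nu}_{L^2(D_h)}$ is not yet estimated by $\norm{\phi_h}_h$.

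\emph{Duality.} Set $e_h:=u^e-u_h$ and $\check e_h:=(e_h|_{\Gamma_h})\circ F_h^{-1}\in H^1(\Gamma)$; by \eqref{fhp} and \eqref{measure}, $\norm{u^e-u_h}_{L^2(\Gamma_h)}\le c\norm{\check e_h}_{L^2(\Gamma)}$. Let $w\in H^2(\Gamma)$ solve $a(\varphi,w)=\int_\Gamma\check e_h\varphi\dd\sigma$ for all $\varphi\in H^1(\Gamma)$, with $\norm w_{H^2(\Gamma)}\le c\norm{\check e_h}_{L^2(\Gamma)}$. Using that $I_hw^e$ is admissible in \eqref{nbm1},
\begin{equation*}
  \norm{\check e_h}_{L^2(\Gamma)}^2=a(\check e_h,w)=\underbrace{\bigl(a(\check e_h,w)-a_h(e_h,w^e)\bigr)}_{I}+\underbrace{a_h(e_h,w^e-I_hw^e)}_{II}+\underbrace{\bigl(a_h(u^e,I_hw^e)-l_h(I_hw^e)\bigr)}_{III}.
\end{equation*}
Term $II$ is controlled by Cauchy--Schwarz, Lemma~\ref{interpol2} (which gives $\norm{w^e-I_hw^e}_h\le ch\norm w_{H^2(\Gamma)}$) and the energy bound $\norm{e_h}_h\le ch\norm f_{L^2(\Gamma)}$. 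Term $III$ is handled by the refined consistency estimate at $\phi_h=I_hw^e$: since $\nabla w^e\cdot\nu=0$ by \eqref{zetang} one may replace $\nabla(I_hw^e)\cdot\nu$ by $\nabla(I_hw^e-w^e)\cdot\nu$, whose $L^2(D_h)$--norm is $\le ch^{3/2}\norm w_{H^2(\Gamma)}$ by Lemma~\ref{interpol2}; thus $\abs{II}+\abs{III}\le ch^2\norm f_{L^2(\Gamma)}\norm w_{H^2(\Gamma)}$.

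\emph{The main obstacle.} The hard term is the geometric consistency $I$, which compares the exact surface form of $(\check e_h,w)$ on $\Gamma$ with the narrow--band form of $(e_h,w^e)$ on $D_h$. I would bring both to integrals over $D_h$: on the $\Gamma$--side via the coarea formula \eqref{transf1} and the lift identity \eqref{gradze}, and on the $D_h$--side via the change of variables $F_h$ with \eqref{eq:DFh-T}, \eqref{uefh} and Lemma~\ref{fhproperties}; the relations $\nabla w^e\cdot\nu=0$ and $\nabla\check e_h^e\cdot\nu=0$ absorb the first--order perturbations, and — crucially — the \emph{symmetric} band $\{\abs{I_h\Phi}<h\}$ with the weight $\tfrac1{2h}$ makes the leading (odd--moment) coarea correction vanish, so the comparison is $O(h^2)$ and not merely $O(h)$. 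This leaves
\begin{equation*}
  I=\frac1{2h}\int_{D_h}\bigl(\nabla v\cdot\nabla w^e+v\,w^e\bigr)\abs{\nabla I_h\Phi}\dd x+O\!\bigl(h^2\norm f_{L^2(\Gamma)}\norm w_{H^2(\Gamma)}\bigr),\qquad v:=\check e_h^e-e_h,
\end{equation*}
where $v$ vanishes on $\Gamma_h$ and $\partial_\nu v=-\partial_\nu e_h$. A one--dimensional Poincar\'e estimate across the band then gives $\norm v_{L^2(D_h)}\le ch\norm{\partial_\nu e_h}_{L^2(D_h)}\le ch^{5/2}\norm f_{L^2(\Gamma)}$ by the energy bound, which immediately makes the $v\,w^e$ term $O(h^2\norm f_{L^2(\Gamma)}\norm w_{H^2(\Gamma)})$. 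The term $\tfrac1{2h}\int_{D_h}\nabla v\cdot\nabla w^e\abs{\nabla I_h\Phi}\dd x$ — whose naive estimate is only $O(1)$ — must instead be treated by writing $\nabla v\cdot\nabla w^e=(P_\nu\nabla v)\cdot\nabla w^e$ (using $\nabla w^e\cdot\nu=0$) and integrating by parts \emph{tangentially} on the level sets of $\Phi$; because both $v$ and $\nabla w^e$ are continuous across the element faces no interface terms appear, and one is left with $v$ tested against $\Delta_\Gamma w^e$ and lower--order curvature terms, giving $\le\tfrac c{2h}\norm v_{L^2(D_h)}(\norm{D^2w^e}_{L^2(D_h)}+\norm{\nabla w^e}_{L^2(D_h)})\le ch^2\norm f_{L^2(\Gamma)}\norm w_{H^2(\Gamma)}$ by Lemma~\ref{lem:norm-equiv}. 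The real work is the accompanying bookkeeping: verifying that all the $O(h^2)$ error terms — in particular the boundary layer near $\partial D_h$, where $\{\abs{I_h\Phi}<h\}$ and $\{\abs\Phi<h\}$ differ, and the associated control of $u_h$ there — genuinely are of that order using only $u\in H^2(\Gamma)$. Once $\abs I\le ch^2\norm f_{L^2(\Gamma)}\norm w_{H^2(\Gamma)}$ is established, inserting the three bounds together with $\norm w_{H^2(\Gamma)}\le c\norm{\check e_h}_{L^2(\Gamma)}$ yields $\norm{\check e_h}_{L^2(\Gamma)}\le ch^2\norm f_{L^2(\Gamma)}$, which with the energy estimate completes the proof.
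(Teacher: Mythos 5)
Your energy estimate and your treatment of terms $II$ and $III$ coincide with the paper's argument (they are precisely the consistency bound \eqref{consist} and its application to $\phi_h=I_h w^e$ using $\nabla w^e\cdot\nu=0$). The divergence --- and the gap --- is in the duality step. You pose the dual problem with data $\check e_h=(e_h|_{\Gamma_h})\circ F_h^{-1}$, the lift of the \emph{trace} of the error on the zero level set, and are then forced to compare the surface form $a(\check e_h,w)$ with the band form $a_h(e_h,w^e)$; this is your term $I$, and the proposed treatment does not close. First, reducing $\int_\Gamma \nabla_\Gamma\check e_h\cdot\nabla_\Gamma w\dd\sigma$ to $\tfrac1{2h}\int_{D_h}\nabla\check e_h^{\,e}\cdot\nabla w^e\abs{\nabla I_h\Phi}\dd x$ up to $O(h^2)$ produces remainders proportional to $\norm{\nabla_\Gamma\check e_h}_{L^2(\Gamma)}$, i.e.\ to the trace of $\nabla u_h$ on $\Gamma_h$. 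The narrow--band energy norm controls only $h^{-1/2}\norm{\nabla e_h}_{L^2(D_h)}$; passing from $T\cap D_h$ to the cut set $T\cap\Gamma_h$ requires a trace/inverse inequality on arbitrarily degenerate intersections (no quasi-uniformity is assumed), and no such bound is available. Second, the tangential integration by parts is not boundary--free: the weight $\abs{\nabla I_h\Phi}$ is only piecewise constant, so moving a tangential derivative off $v$ creates jump terms on element faces, and the level sets $\lbrace\Phi=s\rbrace$ with $h-ch^2<\abs{s}<h+ch^2$ are only partially contained in $D_h$, creating boundary contributions in a layer where $\norm{\nabla v}_{L^2}$ is no better than $O(h^{3/2})$; the resulting term is $O(h^{3/2})$, not $O(h^2)$. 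You flag this ``bookkeeping'', but it is exactly where the argument breaks.

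The paper sidesteps all of this by choosing the dual data to be the \emph{band average} $\tilde E_h(p):=\tfrac1{2h}\int_{-h}^h\tilde e_h(F(p,s))\dd s$ rather than the lifted trace. Then $\norm{\tilde E_h}_{L^2(\Gamma)}^2$ is, by its very definition together with the coarea formula \eqref{transf1}, already a band integral of $\tilde E_h^e\,(e_h\circ F_h^{-1})\abs{\nabla\Phi}$, which Lemma~\ref{aux} applied to the dual solution $w$ converts directly into $a_h(w^e,e_h)$ plus remainders, every one of which is controlled by $\norm{e_h}_h$ and the quantities in \eqref{consist}; no surface trace of $e_h$ or $\nabla e_h$ ever enters the estimate. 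The trace is recovered only at the very end via $\norm{\tilde E_h-\tilde e_h}_{L^2(\Gamma)}\le c h\norm{\nabla e_h}_h\le ch^2\norm{f}_{L^2(\Gamma)}$, a one--dimensional fundamental--theorem--of--calculus estimate across the band. Replacing your term $I$ by this averaging device is the missing idea.
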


\begin{proof}
  Let us write $e_h:= u^e -u_h$. We infer from \eqref{strang} and Lemma~\ref{interpol2} that
  \begin{equation}
    \label{err1}
    \norm{ e_h }_h \leq  ch \norm{ u }_{H^2(\Gamma)} + \sup_{\phi_h \in V_h^B} \frac{ \abs{ a_h(u^e,\phi_h)
      - l_h(\phi_h) } }{\norm{ \phi_h }_h}.
  \end{equation}

  The second term on the right hand side can be estimated with the help of Lemma~\ref{aux}. The transformation rule together with (\ref{zetrans}) yields
  \begin{equation*}
    \frac{1}{2 h} \int_{D^h} f^e \phi_h \circ F_h^{-1} \abs{ \nabla \Phi } \dd x
    = \frac{1}{2h} \int_{D_h} f^e \circ F_h \, \phi_h \, \abs{ \nabla \Phi \circ F_h } \,  \abs{ \operatorname{det} DF_h } \dd x,
  \end{equation*}
  so that we deduce from Lemma~\ref{aux}
  \begin{align*}
    a_h(u^e,\phi_h) - l_h(\phi_h)
    = & \frac{1}{2h} \int_{D_h} ( \nabla u^e \cdot \nabla \eta_h) \, (\nabla \phi_h \cdot \nu) \frac{\abs{ \nabla I_h \Phi } }{ \abs{ \nabla \Phi } } \\
    &  + \frac{1}{2h} \int_{D_h} f^e \phi_h \Bigl( \abs{ \nabla \Phi \circ F_h } \, \abs{ \mbox{det} DF_h } - \abs{ \nabla I_h \Phi } \Bigr) \dd x + \langle S, \phi_h \rangle.
  \end{align*}
  Using \eqref{fhest2}, \eqref{interpolphi}, Lemma~\ref{fhproperties}, \eqref{eq:regularity} as well as Lemma~\ref{lem:norm-equiv} we infer that for $\phi_h \in V_h^B$
  \begin{equation}
    \label{consist}
    \begin{aligned}
      \abs{ a_h(u^e,\phi_h) - l_h(\phi_h) }
      & \leq c \norm{ \nabla u^e }_{L^2(D_h)} \left( \int_{D_h} \abs{ \nabla \phi_h \cdot \nu }^2 \dd x \right)^{\frac{1}{2}} \\
      & \quad + c h \norm{ f^e }_{L^2(D_h)} \norm{ \phi_h }_{L^2(D_h)}
      + c h^2 \norm{ u }_{H^2(\Gamma)} \norm{ \phi_h }_h  \\
      & \leq c h \norm{ f }_{L^2(\Gamma)} \left( \frac{1}{2h} \int_{D_h} \abs{ \nabla \phi_h \cdot \nu }^2 dx \right)^{\frac{1}{2}}
      + ch^2 \norm{ f }_{L^2(\Gamma)} \norm{ \phi_h }_h,
    \end{aligned}
  \end{equation}
  so that  \eqref{err1} implies the following intermediate result:
  \begin{equation}
    \label{intermediate}
    \norm{ e_h }_h = \left( \frac{1}{2 h} \int_{D_h} \bigl( \abs{ \nabla e_h }^2 + e_h^2 \bigr) | \nabla I_h \Phi | \dd x \right)^{\frac12}
    \leq  ch \norm{ f }_{L^2(\Gamma)}.
  \end{equation}

  In order to improve the $L^2$-error bound we define $\tilde{e}_h:= e_h \circ F_h^{-1}$ as well as
  \begin{equation*}
    \tilde{E}_h(p):= \frac{1}{2h} \int_{-h}^h \tilde{e}_h(F(p,s)) \dd s, \quad p \in \Gamma
  \end{equation*}
  with $F$ as above. We denote by $w \in H^2(\Gamma)$ the unique solution of
  \begin{equation*}
    - \Delta_{\Gamma} w + w = \tilde{E}_h \quad \mbox{ on } \Gamma,
  \end{equation*}
  which satisfies
  \begin{equation}
    \label{eq:dual-reg1}
    \norm{ w }_{H^2(\Gamma)} \le c \norm{ \tilde{E}_h }_{L^2(\Gamma)}.
  \end{equation}
  Similar to \eqref{extendedeq} the extension $w^e$ solves
  \begin{equation*}
    - \frac{1}{| \nabla \Phi | } \nabla \cdot \bigl( | \nabla \Phi | \, \nabla w^e \bigr) + w^e = \tilde{E}_h^e + \tilde{R} \quad \mbox{ in } U_{\delta},
  \end{equation*}
  where $\tilde{R}$ is obtained from \eqref{rest} by replacing $u$ by $w$. Using the transformation rule together with \eqref{transf1} we obtain
  \begin{align*}
    \norm{ \tilde{E}_h }_{L^2(\Gamma)}^2
    & = \frac{1}{2h} \int_{-h}^h \int_{\Gamma} \tilde{E}_h \tilde{e}_h \circ F \dd \sigma_p \dd s
    = \frac{1}{2h} \int_{-h}^h \int_{\Gamma} \tilde{E}^e_h \circ F \, \tilde{e}_h \circ F \, | \nabla \Phi \circ F | \, \mu \dd \sigma_p \dd s \\
    & \quad + \frac{1}{2h} \int_{-h}^h \int_{\Gamma} \tilde{E}_h  \, \tilde{e}_h \circ F \, (1- | \nabla \Phi \circ F| \, \mu ) \dd \sigma_p \dd s \\
    & = \frac{1}{2h} \int_{D^h} \tilde{E}^e_h \, e_h \circ F_h^{-1} | \nabla \Phi | \,  \dd x +  \frac{1}{2h} \int_{-h}^h \int_{\Gamma} \tilde{E}_h  \, \tilde{e}_h \circ F \, (1- | \nabla \Phi \circ F| \, \mu) \dd \sigma_p \dd s.
  \end{align*}
  The first term can be rewritten with the help of Lemma~\ref{aux} (applied to $w$ instead of $u$) to give
  \begin{align*}
    \norm{ \tilde{E}_h }_{L^2(\Gamma)}^2
    & = a_h(w^e,e_h) - \langle \tilde{S}, e_h \rangle
    - \frac{1}{2h} \int_{D_h} ( \nabla w^e \cdot \nabla \eta_h)  ( \nabla e_h \cdot \nu) \frac{| \nabla I_h \Phi |}{ | \nabla \Phi |} \dd x \\
    & \qquad + \frac{1}{2h} \int_{-h}^h \int_{\Gamma} \tilde{E}_h  \, \tilde{e}_h \circ F \, (1- | \nabla \Phi \circ F | \, \mu) \dd \sigma_p \dd s \\
    & \equiv \sum_{k=1}^4 I_k.
  \end{align*}
  In view of Lemma~\ref{interpol2}, \eqref{consist}, the fact that
  $\nabla w^e \cdot \nu =0$ and \eqref{intermediate} we have
  \begin{align*}
    & | I_1 | + | I_2 |  \leq  | a_h(w^e-I_h w^e,e_h) | + | a_h(u^e,I_h w^e) - l_h(I_h w^e) | + | \langle \tilde{S},e_h \rangle | \\
    & \quad \leq c h \norm{ w }_{H^2(\Gamma)} \norm{ e_h }_h + ch^2 \norm{ f }_{L^2(\Gamma)} \norm{ I_h w^e }_h  \\
    & \qquad + ch \norm{ f }_{L^2(\Gamma)} \bigl( \frac{1}{2h} \int_{D_h} | \nabla (I_h w^e - w^e) \cdot \nu |^2 dx
    \bigr)^{\frac{1}{2}} + c h^2 \norm{ w }_{H^2(\Gamma)} \norm{ e_h }_h  \\
    & \quad \leq c h^2 \norm{ f }_{L^2(\Gamma)} \norm{ w }_{H^2(\Gamma)}.
  \end{align*}
  Furthermore, \eqref{interpolphi}, \eqref{transf1} and \eqref{intermediate} imply
  \begin{equation*}
    | I_3 | + | I_4 |  \leq  c h \left( \norm{ w^e }_h + \norm{ \tilde{E}_h }_{L^2(\Gamma)} \right)  \norm{ e_h }_h
    \leq  c h^2 \norm{ f }_{L^2(\Gamma)} \left( \norm{ w }_{H^2(\Gamma)} + \norm{ \tilde{E}_h }_{L^2(\Gamma)} \right),
  \end{equation*}
  so that we obtain together with \eqref{eq:dual-reg1}
  \begin{equation*}
    \norm{ \tilde{E}_h }_{L^2(\Gamma)} \leq c h^2 \norm{ f }_{L^2(\Gamma)}.
  \end{equation*}
  Next, since $F(p,0)=p$ we may write for $p \in \Gamma$
  \begin{equation*}
    \tilde{E}_h(p) - \tilde{e}_h(p)
    =  \frac{1}{2h} \int_{-h}^h \int_0^s \nabla \tilde{e}_h(F(p,\tau)) \cdot \frac{\partial F}{ \partial s}(p, \tau) \dd \tau \dd s,
  \end{equation*}
  and hence we obtain with the help of \eqref{intermediate}
  \begin{equation*}
    \norm{ \tilde{E}_h - \tilde{e}_h }_{L^2(\Gamma)} \leq c \sqrt{h}  \left( \int_{D^h} | \nabla \tilde{e}_h |^2 \dd x \right)^{\frac12} \leq  c h \norm{ \nabla e_h }_h \leq c h^2 \norm{ f }_{L^2(\Gamma)}.
  \end{equation*}
  In conclusion we deduce that
  \begin{equation*}
    \norm{ e_h }_{L^2(\Gamma_h)} \leq c \norm{ \tilde{e}_h }_{L^2(\Gamma)} \leq \norm{ \tilde{E}_h - \tilde{e}_h }_{L^2(\Gamma)} +
    \norm{ \tilde{E}_h }_{L^2(\Gamma)} \leq c h^2 \norm{ f }_{L^2(\Gamma)}
  \end{equation*}
  and the theorem is proved.
  \qquad
\end{proof}


\section{A hybrid method for equations on evolving surfaces}
\label{hybridmoving}

\subsection{The setting}

The aim of this section is to combine ideas employed in \S\ref{SIF} and \S\ref{NBM} for the stationary problem in order to develop a finite element method for an advection--diffusion equation on a familiy of evolving hypersurfaces. More precisely, let $(\Gamma(t))_{t \in [0,T]}$ be a family of compact, connected smooth hypersurfaces embedded in $\R^{n+1}$ for $n = 1,2$. We suppose that
\begin{equation*}
  \Gamma(t) = \lbrace x \in \N(t) \, | \, \Phi(x,t) = 0 \rbrace, \quad \mbox{ where } \nabla \Phi(x,t) \neq 0, x \in \N(t)
\end{equation*}
and $\N(t)$ is an open neighbourhood of $\Gamma(t)$. We assume that $\N(t)$ is chosen so small that we can construct the function $p(\cdot,t)$ as in \S\ref{surfcalc}.

Given a velocity field $v(\cdot,t):\Gamma(t) \rightarrow \mathbb{R}^{n+1}$ we then consider the following initial value problem
\begin{subequations}
  \begin{align}
    \label{eq:adv-diff}
    \md u + u \nabla_\Gamma \cdot v - \Delta_\Gamma u  & = f \quad \mbox{ on } \bigcup_{t \in (0,T)} \Gamma(t) \times \{ t \}, \\
    \label{eq:initial}
    u(\cdot, 0)  & = u_0 \quad \mbox{ on } \Gamma(0).
  \end{align}
\end{subequations}
Here, $\md \eta$ denotes the material derivative of a function $\eta: \bigcup_{t \in (0,T)} \Gamma(t) \times \{ t \} \to \R$ which is given by
\begin{equation*}
  \md \eta = \partial_t \eta + v \cdot \nabla \eta,
\end{equation*}
if $\eta$ is extended into a neighbourhood of $\bigcup_{t \in (0,T)} \Gamma(t) \times \{ t \}$.

\subsection{The method}

In order to discretize the above problem we choose a partition  $0 =t_0 < t_1 < \ldots < t_N = T$ of $[0,T]$ with $\tau_m:=t_{m+1}-t_m,m=0,\ldots,N-1$ and $\tau:=\max_{m=0,\ldots,N-1} \tau_m$. Also, let $\T_h$ be an unfitted regular triangulation with mesh size $h$ of a region containing $\N(t), t \in [0,T]$. For $m=0,1,\ldots,N$ we set
\begin{align*}
  \Gamma_h^m & = \{ x \in \N(t_m) \, | \, I_h \Phi( x, t_m ) = 0 \} \\
  D_h^m & = \{ x \in \N(t_m) \, | \, \abs{ I_h \Phi( x, t_m ) } < h \},
\end{align*}
as well as
\begin{equation*}
  \T_h^m:= \lbrace T \in \T_h \, | \, \Hm^{n+1}(T \cap D_h^m) >0 \rbrace \quad \mbox{ and } \quad
  U_h^m := \bigcup_{T \in \T_h^m} T.
\end{equation*}
Here we assume that $0<h\leq h_0$, where $h_0$ is chosen so small that there exists $c_0, c_1 > 0$ such that
\begin{equation*}
  c_0 \leq \abs{ \nabla I_h \Phi(x,t) } \leq c_1, \quad (x,t) \in \bigcup_{t \in (0,T)} \N(t) \times \{ t \}.
\end{equation*}
Finally, we introduce
\begin{equation*}
  V_h^m = \{ \phi_h \in C^0(U_h^m) \, | \, \phi_h|_T \in P_1(T) \mbox{ for each } T \in \T_h^m \}.
\end{equation*}
In what follows we shall frequently use the abbreviation $z^m(x):=z(x,t_m)$.

In order to motivate our method we fix $m \in \lbrace 0,1,\ldots,N-1 \rbrace$ and let $\Psi$ be the solution of
\begin{equation*}
  \Psi_t(x,t) + D\Psi(x,t) v^e(x,t) = 0, \quad \Psi(x,t_{m+1})=x,
\end{equation*}
where $v^e(x,t):=v(p(x,t),t)$. For a sufficiently smooth function $\varphi:\N(t_{m+1}) \to \R$ we define $\eta(x,t):=\varphi(\Psi(x,t))$. Clearly, $\eta(\cdot,t_{m+1})=\varphi$ and a short calculation shows that $\md \eta = 0$. Assuming that $u$ is a solution of \eqref{eq:adv-diff} we obtain with the help of the  Leibniz formula and integration by parts
\begin{align*}
  & \frac{d}{dt} \int_{\Gamma(t)} u \eta \dd \sigma_{| t = t_{m+1}}
  = \int_{\Gamma(t_{m+1})} \bigl( \md (u \eta) + u \eta \nabla_{\Gamma} \cdot v \bigr) \dd \sigma   \\
  & \quad = \int_{\Gamma(t_{m+1})} \varphi \bigl( \md u +  u \nabla_{\Gamma}\cdot  v \bigr) \dd \sigma
  = \int_{\Gamma(t_{m+1})} \bigl( \varphi \Delta_{\Gamma} u + \varphi f \bigr) \dd \sigma \\
  & \quad = - \int_{\Gamma(t_{m+1})} \nabla_{\Gamma} u
  \cdot \nabla_{\Gamma} \varphi \dd \sigma + \int_{\Gamma(t_{m+1})} f \varphi \dd \sigma.
\end{align*}
Since $\Psi(\cdot,t_{m+1}) \equiv \mbox{id}$, a Taylor expansion shows that
\begin{align*}
  \Psi(x,t_m) & = \Psi(x,t_{m+1}-\tau_m) \approx x - \tau_m \Psi_t(x,t_{m+1}) \\
  & = x + \tau_m D\Psi(x,t_{m+1}) v^{e,m+1}(x) = x + \tau_m v^{e,m+1}(x).
\end{align*}
Thus we may approximate the left hand side of the above relation by
\begin{align*}
  \frac{d}{dt} \int_{\Gamma(t)} u \eta \dd \sigma_{|t = t_{m+1}}
  & \approx \frac{1}{\tau_m} \left\{ \int_{\Gamma(t_{m+1})} u^{m+1} \varphi \dd \sigma - \int_{\Gamma(t_m)} u^m \varphi(\cdot+ \tau_m v^{e,m+1}) \dd \sigma \right\}.
\end{align*}

The above calculations motivate the following scheme, in which we use the narrow band approach in order to discretize the elliptic part. Given  $u_h^m \in V^m_h, m=1,\ldots,N-1$, find $u^{m+1}_h \in V^{m+1}_h$ such that
\begin{equation}
  \label{eq:evolve-fem_alt}
  \begin{aligned}
    & \int_{\Gamma_h^{m+1}} u_h^{m+1} \phi_h \dd \sigma_h - \int_{\Gamma_h^{m}} u_h^m \phi_h(\cdot + \tau_m v^{e,m+1}) \dd \sigma_h \\
    & \quad + \frac{\tau_m}{2 h} \int_{D_h^{m+1}} \nabla u_h^{m+1} \cdot \nabla \phi_h \, | \nabla I_h \Phi^{m+1} |  \dd x = \tau_m \int_{\Gamma_h^{m+1}} f^{e,m+1} \phi_h \dd \sigma_h
  \end{aligned}
\end{equation}
for all $\phi_h \in V^{m+1}_h$. Here, $u^0_h = I_h u^0$. Existence and uniqueness of $u^{m+1}_h$ follows in a similar way as for the narrow band method in the elliptic case.

\subsection{Mass conservation}

An important property of solutions of \eqref{eq:adv-diff} is conservation of mass in the case that $\int_{\Gamma(t)} f(\cdot,t) \dd \sigma = 0$. The following lemma shows that our numerical scheme preserves this property under some mild constraints on the discretization parameters.

\begin{lemma}
  \label{lem:mass-conserv}
  Suppose that $ \int_{\Gamma_h^{m+1}} f^{e,m+1} \dd \sigma_h = 0, \, m = 0, \ldots, N-1$. Let $u^m_h \in V^m_h,m=1,\ldots,N$ be the solutions of \eqref{eq:evolve-fem_alt}. Then
  \begin{equation}
    \int_{\Gamma^m_h} u^m_h \dd \sigma_h = \int_{\Gamma^0_h} u^0_h \dd \sigma_h,
  \end{equation}
  provided that $0< h \leq h_1$ and $\tau \leq \gamma \sqrt{h}$.
\end{lemma}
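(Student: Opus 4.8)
The plan is to test the scheme \eqref{eq:evolve-fem_alt} with the constant function $\phi_h\equiv 1$ and then telescope over the time steps; the only genuine work is to verify that this test function is admissible, which is exactly where the hypotheses $0<h\le h_1$ and $\tau\le\gamma\sqrt h$ enter.

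First I would record that $\phi_h\equiv 1$ belongs to $V^{m+1}_h$: the restriction of a constant to every simplex lies in $P_1(T)$ and the function is globally continuous. The one term in \eqref{eq:evolve-fem_alt} whose evaluation is not immediate is $\int_{\Gamma_h^{m}} u_h^m \, \phi_h(\cdot + \tau_m v^{e,m+1}) \dd \sigma_h$, since this requires $x + \tau_m v^{e,m+1}(x) \in U_h^{m+1}$ for every $x \in \Gamma_h^m$. To obtain this I would note that $\abs{\Phi(x,t_m)} = \abs{\Phi(x,t_m) - I_h\Phi(x,t_m)} \le ch^2$ on $\Gamma_h^m$ by the (time-uniform) bound \eqref{interpolphi}, so that $\Gamma_h^m$ is an $O(h^2)$ perturbation of $\Gamma(t_m)$; then compare the explicit displacement $x\mapsto x+\tau_m v^{e,m+1}(x)$ with the exact flow of $v^e$ over $[t_m,t_{m+1}]$, which maps $\Gamma(t_m)$ onto $\Gamma(t_{m+1})$. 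Since $v^e$ is smooth in $(x,t)$, replacing the exact flow by one explicit Euler step and evaluating the velocity at the time level $t_{m+1}$ and at the projected point costs only $O(h^2+\tau_m^2)$, whence
\[
  \abs{\Phi\bigl(x+\tau_m v^{e,m+1}(x),\,t_{m+1}\bigr)} \le C(h^2+\tau_m^2), \qquad x\in\Gamma_h^m .
\]
Bounding $\Phi(\cdot,t_{m+1})$ by $I_h\Phi(\cdot,t_{m+1})$ via \eqref{interpolphi} once more and using $\tau_m\le\tau\le\gamma\sqrt h$ with $\gamma$ and $h_1$ chosen small, the right hand side is $<h$, so $x+\tau_m v^{e,m+1}(x)\in D_h^{m+1}\subseteq U_h^{m+1}$, as required. (Existence and uniqueness of the iterates $u_h^m$ is not an issue: the bilinear form on the left of \eqref{eq:evolve-fem_alt} is positive definite on $V^{m+1}_h\times V^{m+1}_h$ by the argument already used for the narrow band scheme.)

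With admissibility established I would substitute $\phi_h\equiv 1$ into \eqref{eq:evolve-fem_alt}: the gradient term vanishes since $\nabla\phi_h\equiv 0$; the composition $\phi_h(\cdot+\tau_m v^{e,m+1})$ is again identically $1$ on $\Gamma_h^m$, so the second surface integral reduces to $\int_{\Gamma_h^m} u_h^m \dd\sigma_h$; and the right hand side is $\tau_m\int_{\Gamma_h^{m+1}} f^{e,m+1}\dd\sigma_h = 0$ by hypothesis. This gives the one-step identity $\int_{\Gamma_h^{m+1}} u_h^{m+1}\dd\sigma_h = \int_{\Gamma_h^m} u_h^m\dd\sigma_h$ for $m=0,\ldots,N-1$, and iterating it (induction on $m$) yields $\int_{\Gamma_h^m} u_h^m\dd\sigma_h = \int_{\Gamma_h^0} u_h^0\dd\sigma_h$, which is the assertion.

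The main obstacle is thus contained entirely in the second paragraph: quantifying how far the transported discrete surface $\{\,x+\tau_m v^{e,m+1}(x):x\in\Gamma_h^m\,\}$ can drift from $\Gamma_h^{m+1}$ and keeping this distance below the $O(h)$ half-width of the narrow band $D_h^{m+1}$ uniformly in $m$. Balancing the $O(\tau_m^2)$ one-step error against this $O(h)$ width is what forces the coupling $\tau\lesssim\sqrt h$; everything downstream of that bound is an exact algebraic identity.
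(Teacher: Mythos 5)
Your proposal is correct and follows essentially the same route as the paper: the whole content of the lemma is the admissibility of $\phi_h\equiv 1$, i.e.\ showing $x+\tau_m v^{e,m+1}(x)\in D_h^{m+1}\subset U_h^{m+1}$ for $x\in\Gamma_h^m$, which the paper establishes by the same Taylor expansion of $\Phi^{m+1}(x+\tau_m v^{e,m+1}(x))$ (using $\Phi_t+\nabla\Phi\cdot v=0$ on the surface and $\abs{\Phi^m}\le ch^2$ on $\Gamma_h^m$) leading to the bound $c(h^2+\tau_m^2)<h$ under $\tau\le\gamma\sqrt h$, followed by the identical telescoping argument.
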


\begin{proof}
  Let us first observe that
  \begin{equation}
    \label{gammah}
    \lbrace x + \tau_m v^{e,m+1}(x) \, | \, x \in \Gamma^m_h \rbrace  \subset U^{m+1}_h, \quad m=0,\ldots,N-1
  \end{equation}
  provided that $h, \tau$ are sufficiently small. To see this, let $x \in \Gamma^m_h$ and choose an element $T \in \T_h$ such that $x \in T$. Then,
  \begin{equation*}
    \Phi^{m+1}(x+ \tau_m v^{e,m+1}(x)) = \Phi^m(x) +  \tau_m \nabla \Phi^m(x) \cdot v^{e,m+1}(x) + \tau_m \Phi_t(x,t_m) + R_m(x),
  \end{equation*}
  where $\abs{ R_m(x) } \leq c \tau_m^2$. Observing that $\Phi_t + \nabla \Phi \cdot v = 0 $ on $\bigcup_{t \in (0,T)} \Gamma(t) \times \{ t \}$ we may write
  \begin{align*}
    & \nabla \Phi^m(x) \cdot v^{e,m+1}(x) + \Phi_t( x, t_m ) = \nabla \Phi^m(x) \cdot v^{m+1}(p^{m+1}(x)) + \Phi_t( x, t_m ) \\
    & \quad = \bigl( \nabla \Phi^m(x) - \nabla \Phi^{m+1}(p^{m+1}(x)) \bigr) \cdot  v^{m+1}(p^{m+1}(x)) + \Phi_t( x, t_m ) - \Phi_t(p^{m+1}(x),t_{m+1}),
  \end{align*}
  so that
  \begin{align*}
    & \abs{ \Phi^{m+1}(x+ \tau_m v^{e,m+1}(x)) } \leq \abs{ \Phi^m(x) } + c \tau_m  \abs{ x -p^{m+1}(x) } +  c  \tau_m^2 \\
    & \leq \abs{ \Phi^m(x) } + c \tau_m \abs{ \Phi^{m+1}(x) } + c  \tau_m^2  \leq c (h^2 + \tau_m^2),
  \end{align*}
  in view of \eqref{pk} and since $\abs{ \Phi^m(x) } = \abs{ \Phi^m(x) - I_h \Phi^m(x) } \leq c h^2$. As a result,
  \begin{equation*}
    \abs{ (I_h \Phi^{m+1})(x+\tau_m v^{e,m+1}(x)) } \leq \norm{ I_h \Phi^{m+1}- \Phi^{m+1} }_{L^{\infty}} + c (h^2 + \tau_m^2) \leq c (h^2 + \tau_m^2) < h,
  \end{equation*}
  provided that $0 < h \leq h_1$ and $\tau \leq \gamma \sqrt{h}$. Hence, $x + \tau_m v^{e,m+1}(x) \in D^{m+1}_h \subset U^{m+1}_h$ proving \eqref{gammah}. The result of the lemma now follows from inserting $\phi_h \equiv 1 \in V^{m+1}_h$ into \eqref{eq:evolve-fem_alt} and using \eqref{gammah} together with our assumption that $\int_{\Gamma_h^{m+1}} f^{e,m+1} \dd \sigma_h = 0$.
  \qquad
\end{proof}


\section{Numerical Experiments}
\label{numerical}

\subsection{Notes on implementation}

The methods were implemented using the Distributed and Unified Numerics Environment (\textsf{DUNE}) \cite{BasBlaDed08-a,BasBlaDed08-b,DedKloNol10}. Assembly of the matrices is non standard in that the method requires integration over partial elements. To do so we subdivide the integration areas in simplices using the \textsf{Triangle} \cite{She96,triangle} and \textsf{Tetgen} \cite{Si06} packages. In each case, the linear system is solved with the conjugate gradient method until the residual is reduced by a factor of $10^{-8}$ in comparison to its initial value in the $\ell^2$ norm. Due to the lack of shape regularity of $\Gamma_h$ and $D_h$, the matrix systems are ill conditioned and so we used a Jacobi preconditioner in order to speed up the convergence of our iterative solver. In practice, we will take $U_h$ to be a subset of a cube shaped domain. The triangulation $\T_h$ will be computed adaptively refining only those elements which intersect the computational domain, either $\Gamma_h$ or $D_h$. Given errors $E_i$ and $E_{i-1}$ at two different mesh sizes $h_i$ and $h_{i-1}$, we calculate the experimental order of convergence (eoc) by
\begin{equation}
  \label{eq:eoc}
  \mathrm{(eoc)}_i = \frac{ \log( E_i / E_{i-1} ) }{ \log( h_i / h_{i-1} ) }.
\end{equation}

\subsection{Poisson equation}

To test our methods, we present two numerical examples. The first is on a torus and is taken from \cite{OlsReuGra09} and the second is on a potato-like surface from \cite{Dzi88}.

We define the torus through the signed distance function:
\begin{equation*}
  \Gamma = \{ x \in \R^3 \, | \, d(x) = 0 \}
  \quad
  d(x) = \sqrt{ \left( \sqrt{ x_1^2 + x_2^2 } - R \right)^2 + x_3^2 } - r,
\end{equation*}
for $R = 1, r = 0.6$. To compute our exact solution, we parameterise the torus by
\begin{equation*}
  x_1 = ( R + r \cos \theta ) \cos \varphi, \quad x_2 = ( R + r \cos \theta ) \sin \varphi, \quad x_3 = r \sin \theta, \quad \mbox{ for } \theta, \varphi \in (-\pi,\pi)
\end{equation*}
and take the exact solution
\begin{equation*}
  u( \theta, \varphi ) = \cos( 3 \varphi ) \sin( 3 \theta + \varphi ).
\end{equation*}
For this example, \eqref{eq:Finv} can be calculated analytically.

For our second example, we set $\Gamma = \{ x \in \R^3 \, | \, \Phi(x) = 0 \}$ using the level set function:
\begin{equation*}
  \Phi(x) = ( x_1 - x_3^2 )^2 + x_2^2 - 1.
\end{equation*}
From $\Phi$, we calculate the normal $\nu = \frac{\nabla \Phi}{\abs{\nabla \Phi}}$ and the mean curvature by
\begin{equation*}
  H = \nabla \cdot \frac{\nabla \Phi}{\abs{\nabla \Phi}} = \frac{1}{\abs{\nabla \Phi}} \sum_{j,k=1}^3 \left( \delta_{jk} - \frac{ \Phi_{x_j} \Phi_{x_k} }{ \abs{ \nabla \Phi }^2 } \right) \Phi_{x_j x_k}.
\end{equation*}
As exact solution, we take $u(x) = x_1 x_2$ and calculate the right-hand side $f$ as $f = -\Delta_\Gamma u + u$ as
\begin{equation*}
  f(x) = 2 \nu_1(x) \nu_2(x) + H(x) ( x_2 \nu_1(x) + x_1 \nu_2(x) ), \quad x \in \Gamma.
\end{equation*}
For this example, \eqref{eq:Finv} can not be calculated exactly so we approximate using a gradient decent like iteration from \cite{Reu13} originally for the closest point operator.

The errors $\norm{ u^e - u_h }_{L^2(\Gamma_h)}$ for \emph{SIF} and \emph{NBM} are shown in Tables~\ref{tab:poisson-torus} and \ref{tab:poisson-dziuk}. The numerical results confirm the theoretical bounds from Theorems~\ref{thm:line-bound} and \ref{thm:band-bound}. Results are also available for the $H^1$-semi-norm error.

\begin{table}
  \footnotesize
  \centering

  \begin{tabular}{ccccc}
    \hline
    $h$ &  $\norm{ u^e - u_h^{\mbox{\emph{SIF}}} }_{L^2(\Gamma_h)}$ & (eoc) &  %
    $\norm{ u^e - u_h^{\mbox{\emph{NBM}}} }_{L^2(\Gamma_h)}$ &  (eoc) \\
    \hline
$2^{-1}\sqrt{3}$ & $1.67739$ & --- & $1.61943$ & --- \\
$2^{-2}\sqrt{3}$ & $7.10825 \cdot 10^{-1}$ & $1.238650$ & $7.07220 \cdot 10^{-1}$ & $1.195260$ \\
$2^{-3}\sqrt{3}$ & $1.90004 \cdot 10^{-1}$ & $1.903470$ & $2.32053 \cdot 10^{-1}$ & $1.607700$ \\
$2^{-4}\sqrt{3}$ & $4.73865 \cdot 10^{-2}$ & $2.003480$ & $7.17605 \cdot 10^{-2}$ & $1.693190$ \\
$2^{-5}\sqrt{3}$ & $1.19721 \cdot 10^{-2}$ & $1.984800$ & $1.97350 \cdot 10^{-2}$ & $1.862430$ \\
$2^{-6}\sqrt{3}$ & $3.01376 \cdot 10^{-3}$ & $1.990040$ & $5.08158 \cdot 10^{-3}$ & $1.957410$ \\
    \hline
  \end{tabular}

  \caption{Error tables of sharp interface method (\emph{SIF}) and narrow band method (\emph{NBM}) for the first test problem. These calculations took successively 38, 69, 128, 240, 359, 641 conjugate gradient iterations for \emph{SIF} and 33, 54, 97, 182, 392, 634 conjugate gradient iterations for \emph{NBM}.}
  \label{tab:poisson-torus}
\end{table}

\begin{table}
  \footnotesize
  \centering

  \begin{tabular}{ccccc}
    \hline
    $h$ &  $\norm{ u^e - u_h^{\mbox{\emph{SIF}}} }_{L^2(\Gamma_h)}$ & (eoc) &  %
    $\norm{ u^e - u_h^{\mbox{\emph{NBM}}} }_{L^2(\Gamma_h)}$ &  (eoc) \\
    \hline
$2^{-1}\sqrt{3}$ & $3.31237 \cdot 10^{-1}$ & --- & $2.31128 \cdot 10^{-1}$ & --- \\
$2^{-2}\sqrt{3}$ & $9.97842 \cdot 10^{-2}$ & $1.730980$ & $9.06054 \cdot 10^{-2}$ & $1.351020$ \\
$2^{-3}\sqrt{3}$ & $2.57329 \cdot 10^{-2}$ & $1.955200$ & $2.57213 \cdot 10^{-2}$ & $1.816630$ \\
$2^{-4}\sqrt{3}$ & $6.59538 \cdot 10^{-3}$ & $1.964090$ & $7.43214 \cdot 10^{-3}$ & $1.791110$ \\
$2^{-5}\sqrt{3}$ & $1.64586 \cdot 10^{-3}$ & $2.002610$ & $1.94710 \cdot 10^{-3}$ & $1.932450$ \\
$2^{-6}\sqrt{3}$ & $4.10269 \cdot 10^{-4}$ & $2.004200$ & $4.99422 \cdot 10^{-4}$ & $1.962990$ \\
$2^{-7}\sqrt{3}$ & $1.02735 \cdot 10^{-4}$ & $1.997640$ & $1.26086 \cdot 10^{-4}$ & $1.985850$ \\
    \hline
  \end{tabular}

  \caption{Error tables of sharp interface method (\emph{SIF}) and narrow band method (\emph{NBM}) for the second test problem. These calculations took successively 25, 53, 103, 196, 298, 585, 1151 conjugate gradient iterations for \emph{SIF} and 37, 68, 116, 153, 297, 580, 1137 conjugate gradient iterations for \emph{NBM}.}
  \label{tab:poisson-dziuk}
\end{table}

To compare with other methods, we also include a plot of the error in the $L^2$-norm against $h$ for \emph{SIF} and \emph{NBM} along with the unfitted finite element methods of \cite{OlsReuGra09,DecDziEll10}. The plot shows that the error on $\Gamma_h$ is almost the same for each of the four methods considered.

\begin{figure}
  \centering
  \includegraphics{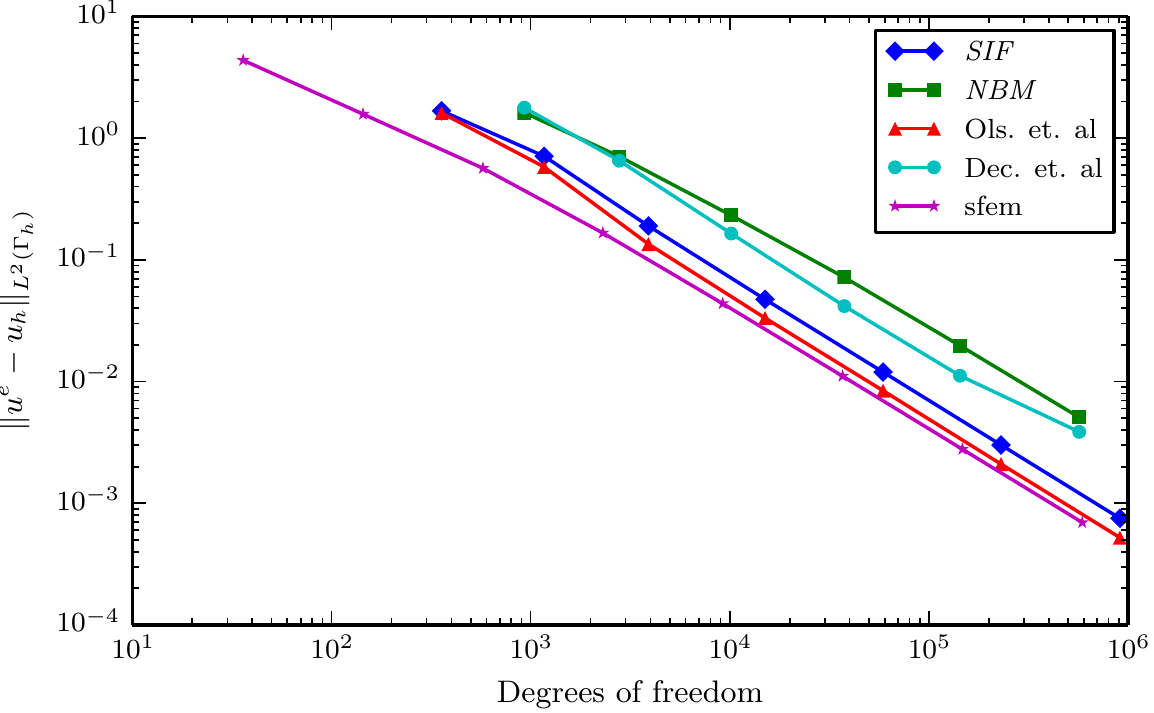}
  \caption{Plot of the $L^2$ error of various methods for the first test problem.}

  \centering
  \includegraphics{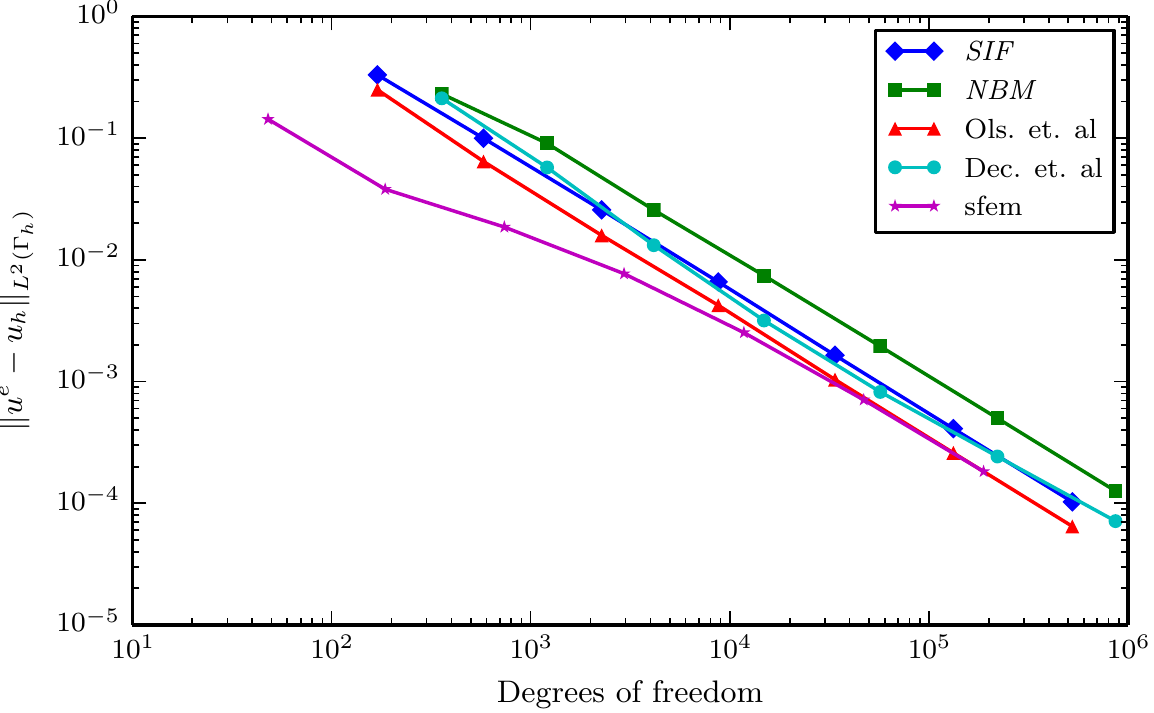}
  \caption{Plot of the $L^2$ error of various methods for the second test problem.}
\end{figure}

\begin{figure}
  \centering
  \includegraphics[width=0.45\textwidth]{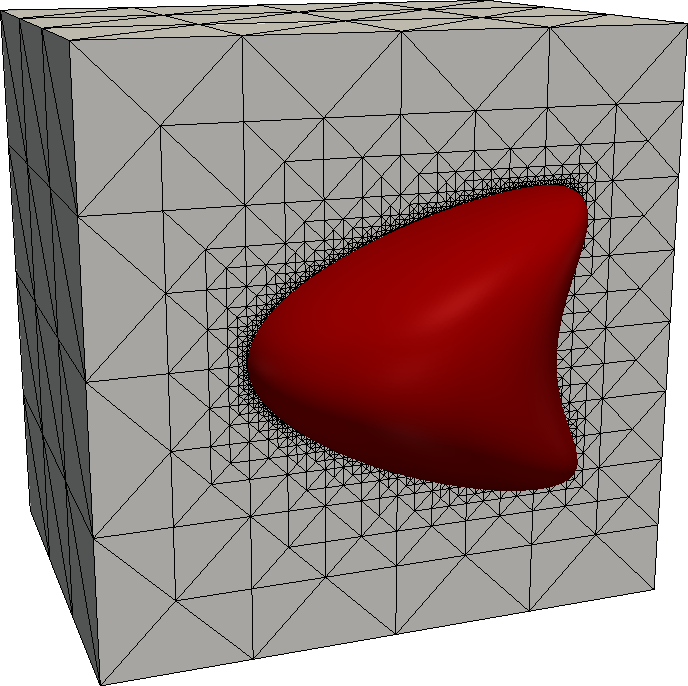}
  \includegraphics[width=0.4\textwidth]{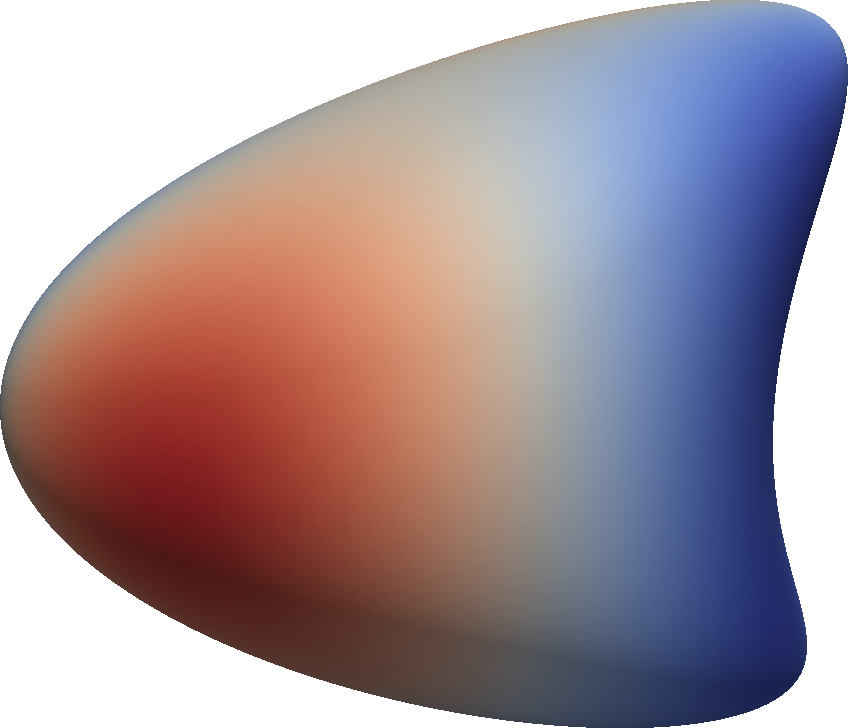}

  \caption{Plots of the computation domain (left) and solution (right) with $h = 2^{-6} \sqrt{3}$ for the second problem.}
\end{figure}

Further numerical examples are available in \cite{Ran13}.

\subsection{Parabolic equation on an evolving curve}

For an example of an evolving curve we take $\Gamma(t) = \{ x \in \R^2 \, | \, \Phi(x,t) = 0 \}$ for
\begin{equation*}
  \Phi(x,t) = \frac{x^2}{ 1 + \frac{1}{4} \sin( 2 \pi t ) } + y^2 - 1,
\end{equation*}
for $t \in [0,\tfrac12]$. We calculate a right-hand side $f$ so that the exact solution is $u(x,t) = \exp( - 4 t ) x_1 x_2$. Taking $\tau = 2 h^2$, the scheme demonstrates second order convergence in the $L^2(\Gamma_h^m)$-norm; see Table~\ref{tab:hybrid-2d}. Numerical experiments confirm the conservation of mass result.

\begin{table}
  \footnotesize
  \centering

  \begin{tabular}{ccc}
    \hline
    $h$ & $\max_m \| u(t^m) - u_h^m \|_{L^2(\Gamma_h^m)}$ & (eoc) \\
    \hline
    $2^{-1} \sqrt{2}$ & $ 1.15457 \cdot 10^{-1} $ & ---  \\
    $2^{-2} \sqrt{2}$ & $ 3.25344 \cdot 10^{-2} $ & $ 1.82732 $ \\
    $2^{-3} \sqrt{2}$ & $ 8.64172 \cdot 10^{-3} $ & $ 1.91258 $ \\
    $2^{-4} \sqrt{2}$ & $ 2.13241 \cdot 10^{-3} $ & $ 2.01883 $ \\
    $2^{-5} \sqrt{2}$ & $ 5.42960 \cdot 10^{-4} $ & $ 1.97357 $ \\
    \hline
  \end{tabular}

  \caption{Results of the hybrid scheme for a parabolic equation on an evolving curve.}
  \label{tab:hybrid-2d}
\end{table}


\end{document}